\newcommand{\ssymbol}[1]{^{\@fnsymbol{#1}}}
\title{The Large Deviation of Semilinear Stochastic Partial Differential Equation Driven by Brownian Sheet\thanks{The work is supported in part by the NSFC Grant No. 12171084 and the fundamental Research Funds for the Central Universities No. 2242022R10013.}}
\author{Qiyong Cao}
\author[$\ssymbol{2}$]{Hongjun Gao}
\affil[$\ssymbol{2}$]{  School of Mathematical Sciences, Nanjing Normal University, 210023, Nanjing, China}
\affil[$\ssymbol{2}$]{  School of mathematics, Southeast University, 211189,  Nanjing, China}
\numberwithin{equation}{section}
\newtheorem{theorem}{Theorem}[section] %
\newtheorem{lemma}{Lemma}[section] %
\newtheorem{definiton}{Definition}[section] %
\newtheorem{remark}{Remark}[section] %
\begin{document}
\maketitle

\begin{abstract}
We prove  the large deviation principle(LDP) for the law of the one-dimensional semi-linear stochastic partial differential equations  driven by a nonlinear multiplicative noise. Firstly, combining the energy estimate and  approximation procedure, we obtain the existence of the global solution. Secondly, the large deviation principle is obtained via the weak convergence method.
\end{abstract}

{\bf 2020 Mathematics Subject Classification:} 60H15; 35R30

{\bf Keywords:} Large deviation principle; Stochastic Burgers equation; Weak convergence method; Uniform Laplace principle
\section{Introduction}
In this paper, we consider the one-dimensional semi-linear stochastic partial differential equations,
\begin{equation}\label{1.1}
\left\{\begin{aligned}
\frac{\partial u^{\epsilon}}{\partial t}(t,x)&=\frac{\partial^{2} u^{\epsilon}}{\partial x^{2}}(t,x)+\frac{\partial }{\partial x}g(t,x, u^{\epsilon}(t,x))+f(t,x,u^{\epsilon}(t,x))+\sqrt{\epsilon}\sigma(t,x,u^{\epsilon}(t,x))\frac{\partial^{2} W}{\partial t \partial x}(t,x), \\
u^{\epsilon}(t,0)&=u^{\epsilon}(t,1)=0, \\
u^{\epsilon}(0,x)&=\eta(x),
\end{aligned}\right.
\end{equation}
where $\frac{\partial W(t,x)}{\partial x}$ denotes the Brownian sheet which is defined on the probability space $(\Omega,\mathcal{F},\mathbb{P})$, the corresponding filtration to Brownian sheet is $\mathcal{F}_{t}$. In fact, $\frac{\partial W(t,x)}{\partial x}$  is a two parameter center Gaussian process which has a covariance $\mathbb{E}\left\{\frac{\partial W(t_1,x_1)}{\partial x}\frac{\partial W(t_2,x_2)}{\partial x}\right\}=\min\{t_1,t_2\}\min\{x_{1},x_{2}\},t_{1},t_2\in[0,T],x_{1},x_{2}\in[0,1]$. Furthermore, it can be written an explicit form
\begin{equation*}
\frac{\partial W(t,x)}{\partial x}=\sum_{k\geq 1}h_k(x)w_{k}(t),
\end{equation*}
where $\{w_{k}(t),k\geq 1\}_k$ is a collection of independent standard Brownian motion, and $h_k(x)=\int_{0}^{x}\mathfrak{h}_{k}(r)dr$, $\left\{\mathfrak{h}_{k}(x), k \geq 1, x \in [0,1]\right\}$ is an orthonormal basis in the space $L^2{[0,1]}$, it is easy to see that $\sum_{k}h_{k}^2(x)=x$, more properties on Brownian sheet can be found in \cite[Chapter 1]{MR3674586}.  The functions $f=f(t,x,r),g=g(t,x,r),\sigma=\sigma(t,x,r)$ are the Borel functions of
$(t,x,r)\in [0,T]\times [0,1] \times \mathbb{R}$. We further assume that $\sigma$ is Lipschitz, $g$ and $f$ quadratic and linear growth respectively.

 The equation \eqref{1.1} contains two main types of equations, one is the stochastic reaction diffusion equation $(g\equiv 0)$ and the other is the stochastic Burger's equation $(f\equiv 0, g(u)=u^{2})$ which  describes the turbulence phenomenon in the fluid. For the study of semi-linear stochastic evolution equations driven by the space-time white noise, Gy\"{o}ngy\cite{MR1608641} first used the approximation methods to study the case of bounded diffusion term coefficients  and obtained the existence of uniqueness of solutions of the equations as well as the comparison principle.  After that, Cardon-Weber\cite{MR1720097} established the large deviation principle for such a class of semi-linear parabolic equations based on the time discretization method. In addition, Foondun and Selayeshar\cite{MR3575422} generalized Selayeshar's  results \cite{MR4246020} from Burgers' equation to the general (i.e., semi-linear evolution equations) using a weak convergence method. Finally, for this class of equations, besides the large deviation principle, Xiong et al.\cite{MR4258396} and Hu et al.\cite{MR4143604} studied the central limit theorem for $ C([0,T];L^{2}([0,1]))$-valued solutions of the equation as well as the behavior of moderate deviations. It is worth noting that the diffusion terms are bounded in the above results, while the diffusion coefficients of the equations we study in this paper are locally bounded, and we try to establish the large deviation behavior to the solutions of the equations driven by a
Brownian sheet.

In this paper, we first establish  the well-posedness of \eqref{1.1}. To this end, we make use of the approximation methods. Since the diffusion term coefficients are locally bounded and the noise is a Brownian sheet, our result is different from Gy\"{o}ngy \cite{MR1608641}. We establish $ L^{\rho}(\rho\geq2)$ estimates to ensure that the well-posedness of the solutions, but It\^{o} formula can not apply to  our situation.  In order to  overcome this difficulty, we first study a degenerate equation, and establish  that  there exists a family of degenerate equations  and their solution converge to  the solution  of non-degenerate equation  in $L^{\rho}$ space, thus we can use the uniform estimate to  the solution of degenerate equation to get the uniform  estimate  for the solution of non-degenerate equation. In addition,   we need the tightness  for stochastic convolution term,  so  we impose the condition  $\rho>6$.

Instead of using the time discretization method to derive the large deviation principle, we use a similar argument in \cite{MR3575422,MR2435853} to $C([0,T];L^{\rho}([0,1]))$-valued solutions. For the time discretization method, one of Cardon-Weber's  assumption \cite{MR1720097} is that the diffusion term coefficients are bounded, author can establish the maximum norm estimate of  $Z(h)$  to control the continuity of  the skeleton equations, and then the Freidlin-Wentzell inequality can be obtained. In view of the diffusion term coefficients are locally bounded, we can not obtain the maximum norm estimate for the solution $Z(h)$ of the skeleton equation as \cite{MR1486551} by using the continuity of the diffusion term $\sigma$. Thus we adopt the weak convergence method to obtain large deviation principle. In addition, compared with \cite{MR3575422,MR2435853}, when we deal with the controlled equation converge to skeleton equation, the solutions to two class of equations do not lie in $C([0,T]\times[0,1])$, we have to obtain the convergence in solution space $C([0,T];L^{\rho}([0,1]))$.

This paper is organized as follows: In Section 2,  we review some technical lemmas as well as the underlying assumptions. In Section 3, we obtain the  existence and uniqueness of the global solution. In Section 4, we give the definition of the large deviation principle and state a sufficient condition to ensure the existence of large deviation principle. In Section 5, we introduce the controlled equation and skeleton equation.  In Section 6, we respectively verify the conditions stated in Section 4 and prove the large deviation principle of the equation.

\section{Preliminaries}
In this Section, some basic assumptions and preliminaries are given to construct the global solution of  \eqref{1.1}:
\begin{description}
  \item[(H1)] $\eta\in L^{\rho}$ and $\rho>6$.
  \item[(H2)] The function $\sigma(t,x,r)$  is uniformly continuous for variable $t,x,r$ and $\sigma$ is globally Lipschitz with Lipschitz constant $L_{\sigma}$  and $|\sigma(t,x,r)|\leq K(1+|r|) $ .
  \item[(H3)] $f$ and $g$ are locally Lipschitz for the third variable, i.e. there exists  a
   constant $L$ such that for $(t,x,p,q)\in[0,T]\times[0,1]\times\mathbb{R}^{2}$,
  \begin{align}\label{2.1}
  |f(t,x,p)-f(t,x,q)|+|g(t,x,p)-g(t,x,q)|\leq L(1+|p|+|q|)|p-q|.
  \end{align}
  \item[(H4)] The function $g$ has the form
  \begin{align}\label{2.2}
  g(t,x,r)=g_{1}(t,x,r)+g_{2}(t,r),
  \end{align}
  where $g_{1}$ and $g_{2}$ are the Borel functions such that there exist a constant $K>0$ for which  for any $(t,x,r)\in [0,T]\times [0,1]\times\mathbb{R}$,
  \begin{align}\label{2.3}
  |g_{1}(t,x,r)|\leq K(1+|r|), \quad |g_{2}(t,r)|\leq K(1+|r|^{2}).
  \end{align}
  \item[(H5)] The function $f$ satisfies linear growth condition: there exists a $K>0$ such that for any $(t,x,r)\in [0,T]\times [0,1]\times\mathbb{R}$ we have
  \begin{align}\label{2.4}
  |f(t,x,r)|\leq K(1+|r|).
  \end{align}
\end{description}
 As \cite{MR1608641,MR1720097,MR3575422}, we impose the same conditions (\textbf{H2})-(\textbf{H5}). For a class of semi-linear parabolic equations, we need to add the condition (\textbf{H1}) to obtain the global solutions. We denote the norm of $L^{\rho}([0,1])$ as $\|\cdot\|_{\rho}$, and $C>0$ may be different from line to line.
\begin{definiton}[Mild solution]\label{mild solution}
We say stochastic process $u^{\epsilon}=\{u^{\epsilon}(t,x):t\in[0,T], x\in[0,1]\}$ is the solution of \eqref{1.1} if the $u^{\epsilon}(t,x)$ is $\mathcal{F}_{t}$-adapted $L^{\rho}([0,1])$-valued continuous  solution and satisfy the formulation
\begin{equation}
\begin{aligned}
&u^{\epsilon}(t,x)=\int_{0}^{1}G_{t}(x,y)\eta(y)dy+\int_{0}^{t}\int_{0}^{1}G_{t-s}(x,y)f(s,y,u^{\epsilon}(s,y))dyds\nonumber\\
&-\int_{0}^{t}\int_{0}^{1}\partial_{y}G_{t-s}(x,y)g(s,y,u^{\epsilon}(s,y))dyds+\sqrt{\epsilon}\int_{0}^{t}\int_{0}^{1}G_{t-s}(x,y)\sigma(s,y,u^{\epsilon}(s,y))W(dy,ds)\nonumber.
\end{aligned}
\end{equation}
\end{definiton}
\begin{remark}
It is worth noting that the last integral of the above formula should be understood in the meaning of It\^{o} integral sense and $W(dy,ds)=\sum_{k}h_k(y)dydw^{k}_s$, and $G_{t}(x,y)$ is the Green function of the heat operator $\frac{\partial}{\partial t}-\frac{\partial^{2}}{\partial x^{2}}$ with Dirichlet's boundary condition. It has the explicit form
\begin{align}\nonumber
G_{t}(x, y)=\frac{1}{\sqrt{4 \pi t}} \sum_{n=-\infty}^{n=+\infty}\left[\exp \left\{\frac{-(y-x-2 n)^{2}}{4 t}\right\}-\exp \left\{\frac{-(y+x-2 n)^{2}}{4 t}\right\}\right]
\end{align}
or
\begin{align}\nonumber
G_{t}(x, y)=1_{\{t>0\}} \sum_{k=1}^{\infty} \mathrm{e}^{\lambda_{k} t} \phi_{k}(x) \phi_{k}(y),
\end{align}
where $t\in [0,T], x\in [0,1],y\in[0,1]$. For $k=1,2,\cdots$
\begin{equation}\nonumber
\begin{cases}\phi_{k}(x)=\sqrt{2} \sin (2 k \pi x), &   \\ \lambda_{k}=4  k^{2} \pi^{2}. & \end{cases}
\end{equation}
The function $\{\phi_{k}(x)\}_{k\in\mathbb{N}}$  are an orthonormal basis of $L^{2}([0,1])$ consisted of the eigenfunction corresponding to the eigenvalue of the operator $\frac{\partial^{2}}{\partial x^{2}}$. $G_{t}(x,y)$  has the following useful estimates
\begin{align}\label{2.5}
\left|G_{t}(x,y)\right|\leq \frac{K_{1}}{t^{\frac{1}{2}}}\exp\left\{-a\frac{(x-y)^{2}}{t} \right\},
\end{align}
\begin{align}\label{2.6}
\left|\frac{\partial G}{\partial y}(t,x,y)\right|\leq \frac{K_{1}}{t}\exp\left\{-b\frac{(x-y)^{2}}{t}\right\},
\end{align}
\begin{align}\label{2.7}
\left|\frac{\partial G}{\partial x\partial y}(t,x,y)\right|\leq \frac{K_{1}}{t^{\frac{3}{2}}}\exp\left\{-c\frac{(x-y)^{2}}{t}\right\}.
\end{align}
The following result is  from Lemma 3.1 of  \cite{MR1608641} and the appendix of Chenal and Millet \cite{MR1486551}.
\begin{lemma}\label{lemma 2.1}
Let $J$ be  a linear operator defined for $v\in L^{\infty}([0,T],L^{1}([0,1]))$, $t\in[0,T]$, and $x\in [0,1]$ by
$$J(v)(t,x)=\int_{0}^{t}\int_{0}^{1}H(r,t,x,y)v(r,y)dydr$$
with $H(t,s,x,y)=\partial_{y}G_{t-s}(x,y), \frac{\partial G}{\partial x\partial y}(t,x,y)$. Then for any $\rho\in[1,\infty], q\in [1,\rho), q<\infty$ such that $k=1+\frac{1}{\rho}-\frac{1}{q}$, $J$ is a bounded linear operator of $L^{\gamma}([0,T];L^{q})$ in $C([0,T];L^{\rho}([0,1]))$ for $\gamma >2k^{-1}$, Moreover $J$ satisfies the following inequality:\\
For any $T\geq0$, for every $t\leq T$ and $\gamma>2k^{-1}$,
\begin{align}
\|J(v)(t,\cdot)\|_{\rho}\leq C\int_{0}^{t}(t-r)^{\frac{1}{2}k-1}\|v(r,\cdot)\|_{q}dr\leq C t^{\frac{1}{2}k-\frac{1}{\gamma}}\left(\int_{0}^{t}\|v(r,\cdot)\|^{\gamma}_{q}dr\right)^{\frac{1}{\gamma}}.
\end{align}
For $T\geq t>s\geq 0$, $0<\alpha<\frac{1}{2}\kappa$, for any $\gamma>(\frac{1}{2}\kappa-\alpha)^{-1}$, there exists a constant $C$ such that
\begin{equation}
\begin{aligned}
\|J(v)(t,\cdot)-J(v)(s,\cdot)\|_{\rho}\leq C|t-s|^{\alpha}\left(\int_{0}^{s}\|v(r,\cdot)\|_{q}^{\gamma}dr\right)^{\frac{1}{\gamma}}.
\end{aligned}
\end{equation}
For each $T>0$, $\beta\in(0,\kappa)$, for any $\delta>2(\kappa-\beta)^{-1}$, there is a constant $C$ such that
\begin{equation}
\begin{aligned}
\|J(v)(t,\cdot)-J(v)(t,\cdot+z)\|_{\rho}\leq C|z|^{\beta}\left(\int_{0}^{t}\|v(r,\cdot)\|^{\delta}_{q}\right)^{\frac{1}{\delta}},
\end{aligned}
\end{equation}
where $t\in[0,T]$ and $z\in[0,1]$.\\
For all $x,y\in [0,1], 0<s\leq t\leq T$, we have
\begin{equation}
\begin{aligned}
\sup_{t\in[0,T]}\int_{0}^{t}\int_{0}^{1}|G_{u}(x,z)-G_{u}(y,z)|^{p}dzdu\leq C|x-y|^{3-p},\quad \frac{3}{2}<p<3,
\end{aligned}
\end{equation}
\begin{equation}
\begin{aligned}
\sup_{x\in[0,1]}\int_{0}^{s}\int_{0}^{1}|G_{t-u}(x,z)-G_{s-u}(y,z)|^{p}dzdu\leq C|t-s|^{\frac{3-p}{2}},\quad 1<p<3,
\end{aligned}
\end{equation}
\begin{equation}
\begin{aligned}
\sup_{x\in[0,1]}\int_{s}^{t}\int_{0}^{1}|G_{u}(x,z)|^{p}dzdu\leq C|t-s|^{\frac{3-p}{2}},\quad 1<p<3.
\end{aligned}
\end{equation}
\end{lemma}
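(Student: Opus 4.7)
The plan is to derive all six assertions from the Gaussian bounds \eqref{2.5}–\eqref{2.7} by combining a generalized Young convolution inequality with direct Gaussian integration, following closely Gy\"ongy \cite{MR1608641} and Chenal–Millet \cite{MR1486551}. The essential input in every case is the pointwise estimate
$$|H(r,t,x,y)| \le C(t-r)^{-\theta}\exp\!\left(-c(x-y)^2/(t-r)\right), \qquad \theta\in\{1,\,3/2\},$$
which allows explicit control of the $L^{m}$ norms of the kernel in either argument. The case $\theta=1$ corresponds to $H=\partial_y G_{t-r}$, while $\theta=3/2$ corresponds to $H=\partial_{xy}G_{t-r}$.

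For the first inequality I would apply a generalized Young inequality: if $1+1/\rho = 1/m+1/q$, then for fixed $r<t$ the $L^{\rho}$ norm of $\int_0^1 H(r,t,\cdot,y)v(r,y)\,dy$ is bounded by $\sup_x\|H(r,t,x,\cdot)\|_{m}\cdot\|v(r,\cdot)\|_{q}$. The substitution $z=(x-y)/\sqrt{t-r}$ gives $\|H(r,t,x,\cdot)\|_m \le C(t-r)^{-\theta+1/(2m)}$, which with $k:=1+1/\rho-1/q$ reproduces precisely the exponent $(t-r)^{k/2-1}$ appearing in the claim (for $\theta=1$; the case $\theta=3/2$ is entirely analogous). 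Hölder in time with conjugate $\gamma'$ yields the second form of the bound, and the integrability of $(t-r)^{(k/2-1)\gamma'}$ near $r=t$ is exactly what forces $\gamma>2k^{-1}$.

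For the time regularity I would split $J(v)(t)-J(v)(s)$ into the ``new'' piece on $[s,t]$, which is controlled by the previous step, plus a ``telescoping'' piece on $[0,s]$ involving $H(r,t,\cdot,\cdot)-H(r,s,\cdot,\cdot)$. The latter is estimated by interpolating the trivial bound (sum of the two $L^m$ norms) against the improved bound obtained from $H(r,t)-H(r,s)=\int_s^t \partial_u H(r,u)\,du$ and the extra $(u-r)^{-1/2}$ decay of $\partial_u G$. The interpolation parameter produces the factor $|t-s|^{\alpha}$ with $\alpha<k/2$, and the constraint $\gamma>(k/2-\alpha)^{-1}$ is again the integrability condition as $r\uparrow s$. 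The spatial estimate is handled identically with $\partial_x H$ in place of $\partial_u H$, giving the factor $|z|^{\beta}$ and the condition $\delta>2(k-\beta)^{-1}$.

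Finally, the three pointwise inequalities on $G$ are direct Gaussian computations: from \eqref{2.5} and the substitution $y\mapsto(x-z)/\sqrt{u}$ one obtains $\int_0^1|G_u(x,z)|^p\,dz\le Cu^{(1-p)/2}$, whose integral over $[s,t]$ equals $C|t-s|^{(3-p)/2}$ exactly when $p<3$. The two difference bounds follow from the fundamental theorem of calculus applied to $\partial_x G$ and $\partial_t G$ together with the same Gaussian calculation, producing the factors $|x-y|^{3-p}$ (where the lower bound $p>3/2$ is needed so that the derivative singularity remains integrable) and $|t-s|^{(3-p)/2}$. The main obstacle throughout is not any individual computation but the simultaneous bookkeeping of the three exponents $(\rho,q,\gamma)$ so that all Gaussian and time integrals converge at once; since the result is classical, in a full write-up I would simply invoke \cite{MR1608641,MR1486551}.
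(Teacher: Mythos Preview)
Your proposal is correct and in fact goes further than the paper itself: the paper does not prove Lemma~\ref{lemma 2.1} at all but simply states that the result is taken from Lemma~3.1 of Gy\"ongy \cite{MR1608641} and the appendix of Chenal--Millet \cite{MR1486551}. Your sketch---Young's convolution inequality on the spatial integral using the Gaussian bounds \eqref{2.5}--\eqref{2.7}, followed by H\"older in time, and interpolation/fundamental-theorem-of-calculus arguments for the increment estimates---is exactly the approach used in those references, and your closing remark that you would ``simply invoke \cite{MR1608641,MR1486551}'' matches the paper's treatment verbatim.
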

More properties of Green function $G_{t}(x,y)$ can be found in  \cite{MR1486551,MR2914776}.
\end{remark}
\section{The existence and uniqueness}

Our objection in this Section is to construct the $\mathcal{F}_t$-adapted $L^{\rho}([0,1])$-valued continuous global solution, it is equipped with a norm
$$\|u\|=(\max_{t\in[0,T]}\mathbb{E}\|u(t)\|^{\rho}_{\rho})^{\frac{1}{\rho}}.$$
We consider another space $B_{\delta,T}$, the definition of $B_{\delta,T}$ as follows
\begin{equation}\nonumber
B_{\delta,T}:=\{u(t,x):\max_{0\leq t\leq T}e^{-\rho\delta t}\mathbb{E}\|u(t)\|^{\rho}_{\rho}<\infty\},
\end{equation}
and equipped norm by
\begin{align}\nonumber
\|u\|_{B_{\delta,T}}=\left(\max_{0\leq t\leq T}e^{-\rho\delta t}\mathbb{E}\|u(t)\|^{\rho}_{\rho}\right)^{\frac{1}{\rho}},
\end{align}
where $\delta$ is a positive constant and will be determined later. It is obviously that $B_{\delta,T}$ is equivalent to $\mathcal{F}_t$-adapted $L^{\rho}([0,1])$-valued continuous function space. Hence we only construct the solution in $B_{\delta,T}$. In order to get the well-posed of the solutions, we first construct the solutions of the truncated equations and then by the compactness argument to guarantee the existence and uniqueness of the original equation \eqref{1.1}. Now consider the following truncated equation
\begin{equation}\label{3.1*}
\begin{aligned}
\frac{\partial u^{\epsilon}}{\partial t}(t,x)=&\frac{\partial^{2} u^{\epsilon}}{\partial x^{2}}(t,x)+\frac{\partial }{\partial x}\chi_{R}(\|u^{\epsilon}(t)\|_{\rho})g(t,x, u^{\epsilon}(t,x))+\chi_{R}(\|u^{\epsilon}(t)\|_{\rho})f(t,x,u^{\epsilon}(t,x))\\&+\sqrt{\epsilon}\chi_{R}(\|u^{\epsilon}(t)\|_{\rho})\sigma(t,x,u^{\epsilon}(t,x))\frac{\partial^{2} W}{\partial t \partial x}(t,x), \\
\end{aligned}
\end{equation}
with Dirichlet boundary condition $u^{\epsilon}(t,0)=u^{\epsilon}(t,1)=0$ and initial condition $\eta_{R}^{\epsilon}(0,x)=\eta(x)$ for $t\in [0,T], x\in[0,1]$, where $\chi_{R}(r)$ is a first-order differentiable function and its derivative is bounded, it satisfies
\begin{equation}
\left\{\begin{aligned}
&\chi_{R}(r)=1,\quad |r|\leq R,\\
&\chi_{R}(r)=0, \quad |r|\geq R+1.
\end{aligned}\right.
\end{equation}
\begin{theorem}\label{Theorem 3.1}
Assume that $f$, $g$, $\sigma$ satisfy conditions \textbf{(H2)}-\textbf{(H5)} and $\eta(x)\in L^{\rho}([0,1]),\rho>6$, then there exists a unique $L^{\rho}([0,1])$-valued continuous solution for the truncated equation \eqref{3.1*}.
\end{theorem}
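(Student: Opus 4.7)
The plan is to construct the solution via Banach's fixed-point theorem on the weighted space $B_{\delta,T}$, where the weight $e^{-\rho\delta t}$ is chosen so that the solution map becomes a strict contraction for $\delta$ sufficiently large. Define $\mathcal{T}:B_{\delta,T}\to B_{\delta,T}$ by the mild formulation of \eqref{3.1*}:
\begin{align*}
(\mathcal{T}u)(t,x) &= \int_0^1 G_t(x,y)\eta(y)\,dy + \int_0^t\!\!\int_0^1 G_{t-s}(x,y)\chi_R(\|u(s)\|_\rho)f(s,y,u(s,y))\,dyds \\
&\quad - \int_0^t\!\!\int_0^1 \partial_y G_{t-s}(x,y)\chi_R(\|u(s)\|_\rho)g(s,y,u(s,y))\,dyds \\
&\quad + \sqrt{\epsilon}\int_0^t\!\!\int_0^1 G_{t-s}(x,y)\chi_R(\|u(s)\|_\rho)\sigma(s,y,u(s,y))\,W(dy,ds),
\end{align*}
so that a fixed point of $\mathcal{T}$ gives the required unique mild solution.

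First I would verify that $\mathcal{T}$ maps $B_{\delta,T}$ into itself. The homogeneous term is controlled by $\|\eta\|_\rho$ via the $L^\rho$-contractivity of $(G_t)$. The $f$-term benefits from (H5) together with the truncation, so $\|\chi_R f(s,\cdot,u(s))\|_\rho\le K(2+R)$, yielding a straightforward $L^\rho$ bound. The $g$-term uses (H4) and the truncation to produce $\|\chi_R g(s,\cdot,u(s))\|_{\rho/2}\le K(1+(R+1)^2)$, and is controlled by Lemma~\ref{lemma 2.1} applied with $q=\rho/2$, so $\kappa=1-1/\rho$ and the condition $\gamma>2\kappa^{-1}$ is met by $\gamma=\rho$. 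For the stochastic term, the BDG inequality applied to the series expansion $W(dy,ds)=\sum_k h_k(y)dy\,dw^k_s$, combined with $\sum_k h_k(y_1)h_k(y_2)=y_1\wedge y_2\le 1$, the linear growth (H2) of $\sigma$, and the truncation, reduces matters to $\bigl(\int_0^t\int_0^1 G_{t-s}(x,y)^2 dyds\bigr)^{\rho/2}<\infty$.

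Next I would establish the contraction estimate. The key is that $\chi_R$ is Lipschitz with bounded derivative and that, on its support, $\|u(s)\|_\rho,\|v(s)\|_\rho\le R+1$. Writing
$$\chi_R(\|u\|_\rho)g(\cdot,u)-\chi_R(\|v\|_\rho)g(\cdot,v)=\chi_R(\|u\|_\rho)[g(\cdot,u)-g(\cdot,v)]+[\chi_R(\|u\|_\rho)-\chi_R(\|v\|_\rho)]g(\cdot,v),$$
the local Lipschitz condition (H3) together with the truncation controls each piece in $L^{\rho/2}$ by $C(R)\|u-v\|_\rho$; analogous decompositions handle $f$ and $\sigma$. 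Applying Lemma~\ref{lemma 2.1} and the stochastic bound, then multiplying by $e^{-\rho\delta t}$ and taking the supremum in $t\in[0,T]$, the time convolutions produce factors of order $\delta^{-\alpha}$ for some $\alpha>0$, giving
$$\|\mathcal{T}u-\mathcal{T}v\|_{B_{\delta,T}}\le \frac{C(R,T,\epsilon)}{\delta^\alpha}\|u-v\|_{B_{\delta,T}},$$
which is a strict contraction for $\delta$ large; Banach's theorem then produces the unique fixed point.

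The main technical obstacle is ensuring the fixed point is genuinely $L^\rho([0,1])$-valued continuous in $t$ (so that it belongs to the natural solution space, not merely to $B_{\delta,T}$ as a space of bounded processes). The $f$- and $g$-contributions inherit H\"{o}lder continuity in $t$ from Lemma~\ref{lemma 2.1}. For the stochastic convolution I would employ the factorization method, splitting $1=\frac{\sin(\pi\alpha)}{\pi}\int_s^t(t-r)^{\alpha-1}(r-s)^{-\alpha}dr$ for $\alpha\in(0,1/2)$ and estimating the resulting auxiliary process in $L^\rho([0,T]\times[0,1])$ via BDG; the subsequent embedding into $C([0,T];L^\rho([0,1]))$ requires $\rho$ large enough that the $(t-r)^{\alpha-1}$ weight is integrable against $G_{t-r}$-type singularities with room to spare, which is precisely what forces the condition $\rho>6$ imposed in (H1).
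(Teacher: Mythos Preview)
Your proposal is correct and follows essentially the same route as the paper: Banach's fixed-point theorem on the weighted space $B_{\delta,T}$, with invariance and contraction obtained via the same term-by-term decomposition and the same use of the truncation $\chi_R$ to convert the local Lipschitz/growth hypotheses into global ones with $R$-dependent constants, after which $\delta$ is chosen large to force the contraction. The only addition is your treatment of $t$-continuity via factorization, which the paper omits entirely (it simply asserts that $B_{\delta,T}$ is equivalent to the space of $L^\rho$-valued continuous processes); note, however, that the paper's genuine use of $\rho>6$ occurs later, in the tightness Lemma for the stochastic convolution, not in this fixed-point step, where the truncation renders $\sigma$ effectively bounded and ordinary Kolmogorov-type arguments suffice.
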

\begin{proof}
The proof of this Theorem mainly applies the Banach fixed point argument. According to Definition \ref{mild solution}, the solution is expressed as follows
\begin{align}\label{3.1}
u^{\epsilon}(t,x):=A^{\epsilon}_{1}(t,x)+A^{\epsilon}_{2}(t,x)+A^{\epsilon}_{3}(t,x)+A^{\epsilon}_{4}(t,x),
\end{align}
where
\begin{equation}\nonumber
  \begin{aligned}
A^{\epsilon}_{1}(t,x)&=\int_{0}^{1}G_{t}(x,y)\eta(y)dy,\\
A^{\epsilon}_{2}(t,x)&=\int_{0}^{t}\int_{0}^{1}G_{t-s}(x,y)\chi_{R}(\|u^{\epsilon}(t)\|_{\rho})f(s,y,u^{\epsilon}(s,y))dyds,\\
A^{\epsilon}_{3}(t,x)&=-\int_{0}^{t}\int_{0}^{1}\partial_{y}G_{t-s}(x,y)\chi_{R}(\|u^{\epsilon}(t)\|_{\rho})g(s,y,u^{\epsilon}(s,y))dyds,\\
A^{\epsilon}_{4}(t,x)&=\sqrt{\epsilon}\int_{0}^{t}\int_{0}^{1}G_{t-s}(x,y)\chi_{R}(\|u^{\epsilon}(t)\|_{\rho})\sigma(s,y,u^{\epsilon}(s,y))W(dy,ds).
     \end{aligned}
\end{equation}
Furthermore, for fixed $R>0$, we consider the map $\mathcal{M}_{T}$ of the solution on $B_{\delta,T}$
\begin{align}\label{3.2}
\mathcal{M}_{T}(u^{\epsilon}(t,x))=A^{\epsilon}_{1}(t,x)+A^{\epsilon}_{2}(t,x)+A^{\epsilon}_{3}(t,x)+A^{\epsilon}_{4}(t,x).
\end{align}
Hence, \eqref{3.1} and \eqref{3.2} imply that we need to prove that there is a unique fixed point for the mapping $\mathcal{M}_{T}$, then we mainly check the invariance and contraction of $\mathcal{M}_{T}$ in the next discussion.\\
\textbf{Invariance:} For any $u^{\epsilon}\in B_{\delta,T}$, checking $\mathcal{M}_{T}(u^{\epsilon})\in B_{\delta,T}$. For $A_{1}^{\epsilon}(t)$,  with the help of Young's inequality and \eqref{2.5}, we have
\begin{equation}\label{3.3}
E\|A_{1}^{\epsilon}(t)\|_{\rho}^{\rho}=\int_{0}^{1}\left(\int_{0}^{1}G_{t}(x,y)\eta(y)dy\right)^{\rho}dx\leq C \|\eta\|_{\rho}^{\rho}.
\end{equation}
Owing to the assumption $(\mathbf{H}5)$, Minkowski's  inequality, Young's inequality and \eqref{2.5}, we figure out
\begin{equation}\label{3.4}
\begin{aligned}
E\|A_{2}^{\epsilon}(t)\|_{\rho}^{\rho}&=\mathbb{E}\int_{0}^{1}\left(\int_{0}^{t}\int_{0}^{1}G_{t-s}(x,y)\chi_{R}(\|u^{\epsilon}(s)\|_{\rho})f(s,y,u^{\epsilon}(s,y))dyds\right)^{\rho}dx\\
&\leq \mathbb{E}\left[\int_{0}^{t}\left(\int_{0}^{1}\left[\int_{0}^{1}G_{t-s}(x,y)\chi_{R}(\|u^{\epsilon}(s)\|_{\rho})f(s,y,u^{\epsilon}(s,y))dy\right]^{\rho}dx\right)^{\frac{1}{\rho}} ds\right]^{\rho}\\
&\leq \mathbb{E}\left[\int_{0}^{t}\|G_{t-s}(\cdot)\|_{1}\|\chi_{R}(\|u^{\epsilon}(s)\|_{\rho})f(s,\cdot,u^{\epsilon}(s,\cdot))\|_{\rho}ds\right]^{\rho}\\
&\leq C_{R}t^{\rho}.
\end{aligned}
\end{equation}
Similarly, using assumption $(\mathbf{H}4)$, Minkowski's inequality, Young's inequality and \eqref{2.6} we have
\begin{equation}\label{3.5}
\begin{aligned}
\mathbb{E}\|A_{3}^{\epsilon}(t)\|_{\rho}^{\rho}&=\mathbb{E}\int_{0}^{1}\left(\int_{0}^{t}\int_{0}^{1}\partial_{y}G_{t-s}(x,y)\chi_{R}(\|u^{\epsilon}(s)\|_{\rho})g(s,y,u^{\epsilon}(s,y))dyds\right)^{\rho}dx\\
&\leq\mathbb{E}\left[\int_{0}^{t}\left(\int_{0}^{1}\left(\int_{0}^{1}\partial_{y}G_{t-s}(x,y)\chi_{R}(\|u^{\epsilon}(s)\|_{\rho})g(s,y,u^{\epsilon}(s,y))dy\right)^{\rho}dx\right)^{\frac{1}{\rho}}ds\right]^{\rho}\\
&\leq\mathbb{E}\left[\int_{0}^{t}\|\partial_{y}G_{t-s}(\cdot)\|_{\rho}\chi_{R}(\|u^{\epsilon}(s)\|_{\rho})\|g(s,\cdot,u^{\epsilon}(s,\cdot))\|_{1}ds\right]^{\rho}\\
&\leq C\mathbb{E}\left[\int_{0}^{t}(t-s)^{-1+\frac{1}{2\rho}}\chi_{R}(\|u^{\epsilon}(s)\|_{\rho})\|g(s,\cdot,u^{\epsilon}(s,\cdot))\|_{1}ds\right]^{\rho}\\
&\leq C_{R}t^{\frac{1}{2}}.
\end{aligned}
\end{equation}
Finally, for $A_{4}^{\epsilon}(t,x)$ using Fubini's theorem, B-D-G inequality, H\"{o}lder's inequality, the assumption \textbf{(H2)},  Young's inequality and Minkovski's inequality to get
\begin{equation}\label{3.6}
\begin{aligned}
\mathbb{E}\|A_{4}^{\epsilon}(t,x)\|_{\rho}^{\rho}&=\epsilon^{\frac{\rho}{2}}\mathbb{E}\int_{0}^{1}\left[\int_{0}^{t}\int_{0}^{1}G_{t-s}(x,y)\chi_{R}(\|u^{\epsilon}(s)\|_{\rho})\sigma(s,y,u^{\epsilon}(s,y))W(dy,ds)\right]^{\rho}dx\\
&=\epsilon^{\frac{\rho}{2}}\mathbb{E}\int_{0}^{1}\left[\int_{0}^{t}\int_{0}^{1}G_{t-s}(x,y)\chi_{R}(\|u^{\epsilon}(s)\|_{\rho})\sigma(s,y,u^{\epsilon}(s,y))\sum_{k}h_k(y)dydw_s^k\right]^{\rho}dx\\
&\leq\epsilon^{\frac{\rho}{2}}\mathbb{E}\int_{0}^{1}\left[\int_{0}^{t}\int_{0}^{1}G^{2}_{t-s}(x,y)\chi_{R}(\|u^{\epsilon}(s)\|_{\rho})\sigma^{2}(s,y,u^{\epsilon}(s,y))ydyds\right]^{\frac{\rho}{2}}dx\\
&=C\epsilon^{\frac{\rho}{2}}\mathbb{E}\left[\int_{0}^{t}(t-s)^{-\frac{1}{2}}\chi_{R}(\|u^{\epsilon}(s)\|_{\rho})\|u^{\epsilon}(s)\|^{2}_{\rho}ds\right]^{\frac{\rho}{2}}\\
&\leq C_{R}\epsilon^{\frac{\rho}{2}}t^{\frac{\rho}{4}}.
\end{aligned}
\end{equation}
Thus, by \eqref{3.2}-\eqref{3.6}, we have
$$
\|\mathcal{M}_{T}(u^{\epsilon})\|_{B_{\delta,T,\rho}}\leq C,
$$
where C is only depend on  $T$, $R$ and $\epsilon$.\\
\textbf{Contraction:} For any $u^{\epsilon}(t,x),v^{\epsilon}(t,x)\in B_{T,\delta}$, we need to estimate $\mathcal{M}_{T}\left(u^{\epsilon}(t, x)\right)-\mathcal{M}_{T}\left(v^{\epsilon}(t, x)\right)$,
\begin{equation}
\begin{aligned}
\mathcal{M}_{T}\left(u^{\epsilon}(t, x)\right)-\mathcal{M}_{T}\left(v^{\epsilon}(t, x)\right):=B_{1}^{\epsilon}(t,x)+B_{2}^{\epsilon}(t,x)+B_{3}^{\epsilon}(t,x),
\end{aligned}
\end{equation}
where
\begin{equation}\nonumber
  \begin{aligned}
B_{1}^{\epsilon}(t,x)&=\int_{0}^{t}\int_{0}^{1} G_{t-s}(x,y)\biggr(\chi_{R}(\|u^{\epsilon}(s)\|_{\rho})f(s,y,u^{\epsilon}(s,y))-\chi_{R}(\|v^{\epsilon}(s)\|_{\rho})f(s,y,v^{\epsilon}(s,y))\biggr)dyds,\\
B_{2}^{\epsilon}(t,x)&=-\int_{0}^{t}\int_{0}^{1}\partial_{y}G_{t-s}(x,y)\biggr(\chi_{R}(\|u^{\epsilon}(s)\|_{\rho})g(s,y,u^{\epsilon}(s,y))-\chi_{R}(\|v^{\epsilon}(s)\|_{\rho})g(s,y,v^{\epsilon}(s,y))\biggr)dyds,\\
B_{3}^{\epsilon}(t,x)&=\sqrt{\epsilon}\int_{0}^{t}\int_{0}^{1}G_{t-s}(x,y)\biggr(\chi_{R}(\|u^{\epsilon}(s)\|_{\rho})\sigma(s,y,u^{\epsilon}(s,y))-\chi_{R}(\|v^{\epsilon}(s)\|_{\rho})\sigma(s,y,v^{\epsilon}(s,y))\biggr)W(dy,ds).
    \end{aligned}
\end{equation}
For  $B_{1}^{\epsilon}(t,x)$, either $\|u^\epsilon(s)\|_{\rho}\leq\|v^\epsilon(s)\|_{\rho} $ or $\|u^\epsilon(s)\|_{\rho}\geq\|v^\epsilon(s)\|_{\rho} $, for any $s\in[0,T]$. Without loss of generality, let  $\|u^\epsilon(s)\|_{\rho}\leq\|v^\epsilon(s)\|_{\rho}$,  then
\begin{align*}
  &\left\|\chi_{R}(\|u^{\epsilon}(s)\|_{\rho})f\left(s, \cdot, u^{\epsilon}(s, \cdot)\right)-\chi_{R}(\|v^{\epsilon}(s)\|_{\rho})f\left(s, \cdot, v^{\epsilon}(s, \cdot)\right)\right\|_{1}\\
  &~~~~~\leq \left\|(\chi_{R}(\|u^{\epsilon}(s)\|_{\rho})-\chi_{R}(\|v^{\epsilon}(s)\|_{\rho}))f(s,y,u^{\epsilon}(s,y))\right\|_{1}\\
  &~~~~~~~~~+\left\|\chi_{R}(\|v^{\epsilon}(s)\|_{\rho})(f(s,y,u^{\epsilon}(s,y))-f(s,y,u^{\epsilon}(s,y)))\right\|_{1}\\
  &\leq \|u^{\epsilon}(s)-v^{\epsilon}(s)\|_{\rho}(1+R)+(2R+1)\|u^{\epsilon}(s)-v^{\epsilon}(s)\|_{\rho}.
\end{align*}
 By assumption \textbf{(H3)}, Minkowski's inequality, Young's inequality and H\"{o}lder's inequality we have
\begin{equation}\label{3.8}
\begin{small}
\begin{aligned}
\mathbb{E}\|B_{1}^{\epsilon}(t,\cdot)\|_{\rho}^{\rho}=&\mathbb{E} \int_{0}^{1}\left[\int_{0}^{t}\int_{0}^{1} G_{t-s}(x,y)\left(\chi_{R}(\|u^{\epsilon}(s)\|_{\rho})f(s,y,u^{\epsilon}(s,y))-\chi_{R}(\|v^{\epsilon}(s)\|_{\rho})f(s,y,v^{\epsilon}(s,y))\right)dyds \right]^{\rho}dx\\
\leq &\mathbb{E}\left\{\int_{0}^{t}\left[\int_{0}^{1}\left(\int_{0}^{1}G_{t-s}(x,y)(\chi_{R}(\|u^{\epsilon}(s)\|_{\rho})f\left(s, y, u^{\epsilon}(s, y)\right)-\chi_{R}(\|v^{\epsilon}(s)\|_{\rho})f\left(s, y, v^{\epsilon}(s, y)\right))dy\right)^{\rho}dx\right]^{\frac{1}{\rho}}ds\right\}^{\rho}\\\leq& \mathbb{E}\left\{\int_{0}^{t}\|G_{t-s}(\cdot)\|_{\rho}\bigg\|\chi_{R}(\|u^{\epsilon}(s)\|_{\rho})f\left(s, \cdot, u^{\epsilon}(s, \cdot)\right)-\chi_{R}(\|v^{\epsilon}(s)\|_{\rho})f\left(s, \cdot, v^{\epsilon}(s, \cdot)\right)\right\|_{1}ds\bigg\}^{\rho}\\
\leq& C\mathbb{E}\left\{\int_{0}^{t}(t-s)^{-\frac{1}{2}+\frac{1}{2\rho}}(3R+2)\|u^{\epsilon}(s)-v^{\epsilon}(s)\|_{\rho}ds\right\}^{\rho}\\
\leq& C_{R}\mathbb{E}\left\{\int_{0}^{t}(t-s)^{-\frac{1}{2}+\frac{1}{2\rho}}\|u^{\epsilon}(s)-v^{\epsilon}(s)\|_{\rho}ds\right\}^{\rho}.\\
\end{aligned}
\end{small}
\end{equation}
Furthermore, using \eqref{3.8} we have
\begin{equation}\label{3.9}
\begin{aligned}
\|B_{1}^{\epsilon}\|^{\rho}_{B_{T,\delta,\rho}}\leq & C_{R}\left\{\int_{0}^{t}e^{-\delta(t-s)}(t-s)^{-\frac{1}{2}+\frac{1}{2\rho}}\|u^{\epsilon}(s)-v^{\epsilon}(s)\|_{B_{T,\delta,\rho}}ds\right\}^{\rho}\\
\leq &C_{R}\left(\frac{1}{\delta}\right)^{\frac{\rho+1}{2}}\Gamma^{\rho}(\frac{\rho+1}{2\rho})\|u-v\|_{B_{T,\delta,\rho}}^{\rho}.
\end{aligned}
\end{equation}
And then, using the assumption \textbf{(H3)}, \eqref{2.6} and the similar calculations as before, we obtain
\begin{equation}\label{3.10}
\begin{aligned}
\mathbb{E}\|B_{2}^{\epsilon}(t,\cdot)\|^{\rho}_{\rho}&=\mathbb{E}\int_{0}^{1}\left[\int_{0}^{t}\int_{0}^{1}\partial_{y}G_{t-s}(x,y)\bigg(g(s,y,u^{\epsilon}(s,y))-g(s,y,v^{\epsilon}(s,y))\bigg)dyds\right]^{\rho}dx\\
&\leq C_{R}\mathbb{E}\left(\int_{0}^{t}(t-s)^{-1+\frac{1}{2\rho}}\|u^{\epsilon}(s)-v^{\epsilon}(s)\|_{\rho}ds\right)^{\rho}.
\end{aligned}
\end{equation}
Similar to \eqref{3.9} we have
\begin{equation}\label{3.11}
\begin{aligned}
  \left\|B_{2}^{\epsilon}\right\|_{B_{T, \delta,\rho}}^{\rho} \leq C_{R}\left(\frac{1}{\delta}\right)^{\frac{1}{2}}\Gamma^{\rho}(\frac{1}{2\rho})\|u^{\epsilon}-v\|_{B_{T, \delta,\rho}}^{\rho}.
\end{aligned}
\end{equation}
Finally, applying the assumption \textbf{(H2)}, Fubini's theorem, the B-D-G inequality, H\"{o}lder's inequality and similar calculations as before, one gets
\begin{equation}
\begin{aligned}
\mathbb{E}\left\|B_{3}^{\epsilon}(t, \cdot)\right\|_{\rho}^{\rho} &\leq C_{\rho} \epsilon^{\rho} \mathbb{E} \int_{0}^{1}\left(\int _ { 0 } ^ { t } \int _ { 0 } ^ { 1 } G _ { t - s } ^ { 2 } ( x , y ) \left(\chi_{R}\left(\left\|u^{\epsilon}(s)\right\|_{\rho}\right) \sigma\left(s, y, u^{\epsilon}(s, y)\right)\right.\right.\\
&\left.\left.\quad-\chi_{R}\left(\left\|v^{\epsilon}(s)\right\|_{\rho}\right) \sigma\left(s, y, v^{\epsilon}(s, y)\right)\right)^{2} y d y d s\right)^{\frac{\rho}{2}} d x \\
& \leq C_{R,\epsilon,\rho}\left\{\int_{0}^{t}(t-s)^{-1+\frac{1}{\rho}}\left(\mathbb{E}\left\|u^{\epsilon}(s)-v^{\epsilon}(s)\right\|_{\rho}^{\rho}\right)^{\frac{\rho}{2}} d s\right\}^{\frac{\rho}{2}} \\
& \leq C_{R,\epsilon,\rho}\left\{\int_{0}^{t}(t-s)^{-1+\frac{1}{\rho}} e^{2 \delta s} d s\right\}^{\frac{\rho}{2}}\left\|u^{\epsilon}-v^{\epsilon}\right\|_{B_{T, \delta, \rho}}^{\rho}.
\end{aligned}
\end{equation}
Hence,
\begin{equation}\label{3.13}
\begin{aligned}
\left\|B_{3}^{\epsilon}\right\|_{B_{T, \delta},\rho}^{\rho}\leq C_{R,\epsilon,\rho}\left(\frac{1}{\delta}\right)^{\frac{1}{2}}\Gamma^{\frac{\rho}{2}}(\frac{1}{\rho})\|u-v\|_{B_{T, \delta,\rho}}^{\rho}.
\end{aligned}
\end{equation}
\eqref{3.9}, \eqref{3.11} and \eqref{3.13} imply that we can choose a large enough $\delta>0$ such that $C_{R}\left(\frac{1}{\delta}\right)^{\frac{\rho+1}{2}}\Gamma^{\rho}(\frac{\rho+1}{2\rho})+C_{R}\left(\frac{1}{\delta}\right)^{\frac{1}{2}}\Gamma^{\rho}(\frac{1}{2\rho})+\epsilon C_{R}\left(\frac{1}{\delta}\right)^{\frac{1}{2}}\Gamma^{\frac{\rho}{2}}(\frac{1}{\rho})<1$, we get the contraction of map $\mathcal{M}_{T}$. Then there exists a unique solution for equation \eqref{1.1} by the Banach fixed point theorem.
\end{proof}
Based on the well-posedness of the above local solution,  if equation \eqref{1.1} has a global solution, then it must be unique. Then we proceed to establish the global solution of \eqref{1.1}. To this end, we will use the compactness arguments to obtain the global solution. So let us consider the following approximate equation of \eqref{1.1}
\begin{equation}\label{3.16}
\left\{\begin{aligned}
\frac{\partial u_{n}^{\epsilon}}{\partial t}(t,x)&=\frac{\partial^{2} u_{n}^{\epsilon}}{\partial x^{2}}(t,x)+\frac{\partial }{\partial x}g_{n}(t,x, u_{n}^{\epsilon}(t,x))+f_{n}(t,x,u_{n}^{\epsilon}(t,x))+\sqrt{\epsilon}\sigma_{n}(t,x,u_{n}^{\epsilon}(t,x))\frac{\partial^{2} W}{\partial t \partial x}(t,x), \\
u_{n}^{\epsilon}(t,0)&=u_{n}^{\epsilon}(t,1)=0, \\
u_{n}^{\epsilon}(0,x)&=\eta_{0n}(x),
\end{aligned}\right.
\end{equation}
where   we take sequences of  bounded Borel functions $f_{n}:=f_{n}(t,x,r), g_{n}:=g_{n}(t,x,r)$ and $\sigma_{n}:=\sigma_{n}(t,x,r)$ such that they are globally Lipshitz in $r\in\mathbb{R}$  and
\begin{itemize}
  \item $f_{n}(t,x,r)=f(t,x,r),g_{n}(t,x,r)=g(t,x,r),\sigma_{n}(t,x,r)=\sigma(t,x,r) \quad \text{for}\quad |r|\leq n$,
  \item $f_{n}(t,x,r)=0,g_{n}(t,x,r)=0,\sigma_{n}(t,x,r)=0 \quad \text{for}\quad |r|\geq n+1$,
\end{itemize}
 $f_{n}(t,x,r),g_{n}(t,x,r)=g_{1,n}(t,x,r)+g_{2,n}(t,r),\sigma_{n}(t,x,r)$ satisfy the same growth  and Lipschitz condition as $f(t,x,r),g(t,x,r),\sigma(t,x,r)$  and with constants which are independents of $n$. Consider a   bounded  and smooth   sequence $\{\eta_{0,n}\}_{n\in \mathbb{N}}$ converges to $\eta$ in $L^{\rho}$. Similar to the previous section, the equation \eqref{3.16} can be expressed in the form of a mild solution.
\begin{equation}\label{3.16*}
\begin{aligned}
u^{\epsilon}_{n}(t, x)=&\int_{0}^{1} G_{t}(x, y) \eta_{0,n}(y) d y+\int_{0}^{t} \int_{0}^{1} G_{t-s}(x, y) f_n\left(s, y, u^{\epsilon}_{n}(s, y)\right) d y d s \\
&-\int_{0}^{t} \int_{0}^{1} \partial_{y} G_{t-s}(x, y) g_n\left(s, y, u^{\epsilon}_{n}(s, y)\right) d y d s\\
&+\sqrt{\epsilon} \int_{0}^{t} \int_{0}^{1} G_{t-s}(x, y) \sigma_n\left(s, y, u^{\epsilon}_{n}(s, y)\right)W(d y, d s)\\
=&: J_{1,n}+J^{\epsilon}_{2,n}+J^{\epsilon}_{3,n}+J^{\epsilon}_{4,n}.
\end{aligned}
\end{equation}
 In view of Theorem \ref{Theorem 3.1}, equation \eqref{3.16} exists a unique solution. Our final objection is to obtain the global solution from the sequence $\{u^\epsilon_{n}\}_{n\in\mathbb{N}}$, thus the next deliberation is the argument of the tightness of sequence $\{u^\epsilon_{n}\}_{n\in\mathbb{N}}$. Before that, it is necessary to establish  uniform estimate of the solutions. To this end,   we first  consider a  degenerate equation
 \begin{equation}\label{Degenerate equation}
  \begin{aligned}
    \frac{\partial u_n^{k,\epsilon}}{\partial t} (t,x)&=\frac{\partial^2 u_n^{k,\epsilon}}{\partial x^2} (t,x)+f_{n}(t,x,u_n^{k,\epsilon}(t,x))+\frac{\partial }{\partial x}g_{n}(t,x, u_n^{k,\epsilon}(t,x))\\
    &~~~~+\sum_{i=1}^{k}\sigma_{n}(t,x, u_n^{k,\epsilon}(t,x))h_{i}(x)dw_{i}(t)
  \end{aligned}
 \end{equation}
and its mild formulation
 \begin{equation}\label{Degenerate equation1}
  \begin{aligned}
   u_n^{k,\epsilon}(t,x)=&\int_{0}^{1}G_{t}(x,y)\eta_{0,n}(y)dy+\int_{0}^{t}\int_{0}^{1}G_{t-s}(x,y)f_{n}(s,y,u_n^{k,\epsilon}(s,y))dyds\\
    &-\int_{0}^{t}\int_{0}^{1}\partial_{y}G_{t-s}(x,y)g_{n}(s,y,u_n^{k,\epsilon}(s,y))dyds\\
    &+\sum_{i=1}^{k}\sqrt{\epsilon}\int_{0}^{t}\int_{0}^{1}G_{t-s}(x,y)\sigma_{n}(s,y,u^{k,\epsilon}_{n}(s,y))h_i(y)dydw_{i}(s).
  \end{aligned}
 \end{equation}
Equation  \eqref{Degenerate equation}  can be regard as an  evolution equation driven by a  finite  dimensional  Brownian motion, thus  \cite[Lemma 4.3]{MR1787126} can be applied here.  We  get  the uniform estimate for $\mathbb{E}\sup_{t\in[0,T]}\|u_{n}^{\epsilon}(t)\|_{\rho}^{\rho}$ by  $\mathbb{E}\sup_{t\in[0,T]}\|u_{n}^{k,\epsilon}(t)\|_{\rho}^{\rho}$, if we can show that $\|u_{n}^{k,\epsilon}(t)\|_{\rho}^{\rho}$ converging to $\|u_{n}^{\epsilon}(t)\|_{\rho}^{\rho}$ almost surely for sufficiently  large  $k$.
\begin{lemma}\label{convergence}
  There exists a subsequence $\{k_{n^\prime}\}_{n^\prime\in \mathbb{Z}^{+}}\subset \{k\}_{k\in \mathbb{Z}^{+}}$  such that
  $$\|u^\epsilon_n(t)-u_{n}^{k_{n^\prime},\epsilon}(t)\|_{\rho}\rightarrow 0,\quad n^{\prime}\rightarrow\infty$$
 for any $n>0,\rho\geq 2$, where the convergence  take places almost surely in a full measure set $\hat{\Omega}^0.$
\end{lemma}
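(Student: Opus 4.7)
Set $v_n^{k,\epsilon}(t,x):=u_n^{\epsilon}(t,x)-u_n^{k,\epsilon}(t,x)$. My plan is to subtract the two mild formulations \eqref{3.16*} and \eqref{Degenerate equation1}, estimate each piece of the resulting difference in the weighted norm $\|\cdot\|_{B_{T,\delta,\rho}}$ introduced in Section~3, and then pass from $L^\rho$-mean convergence to almost sure convergence along a subsequence by a Chebyshev--Borel--Cantelli argument combined with a diagonal extraction.

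After subtraction the initial-data terms cancel and one is left with three Lipschitz-type contributions --- from the $f_n$-drift, the $g_n$-drift, and the matched-coefficient part $\sigma_n(\cdot,u_n^\epsilon)-\sigma_n(\cdot,u_n^{k,\epsilon})$ of the stochastic integral against the first $k$ modes --- together with the truncation-error term
\begin{equation*}
R_n^{k,\epsilon}(t,x):=\sqrt{\epsilon}\sum_{i>k}\int_{0}^{t}\!\!\int_{0}^{1}G_{t-s}(x,y)\sigma_n(s,y,u_n^{\epsilon}(s,y))h_i(y)\,dy\,dw_i(s).
\end{equation*}
Since $f_n,g_n,\sigma_n$ are bounded and globally Lipschitz for each fixed $n$, the three Lipschitz pieces admit estimates identical in form to \eqref{3.9}, \eqref{3.11} and \eqref{3.13}; choosing $\delta$ large enough, these contributions can be absorbed into the left-hand side of the resulting inequality for $\|v_n^{k,\epsilon}\|_{B_{T,\delta,\rho}}^{\rho}$.

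The core step is to show $\sup_{t\in[0,T]}\mathbb{E}\|R_n^{k,\epsilon}(t)\|_\rho^\rho\to 0$ as $k\to\infty$. Writing $r_k(y):=\sum_{i>k}h_i^{2}(y)$, an application of B--D--G followed by Cauchy--Schwarz on the Fourier sum (using $|\sum_{i>k}h_i(y)h_i(y')|\leq\sqrt{r_k(y)r_k(y')}$) together with the uniform bound $|\sigma_n|\leq C_n$ yields
\begin{equation*}
\mathbb{E}\|R_n^{k,\epsilon}(t)\|_\rho^\rho\leq C_{n,\epsilon,\rho}\left(\int_{0}^{1}r_k(y)\,dy\right)^{\rho/2}\sup_{x\in[0,1]}\left(\int_{0}^{t}\!\!\int_{0}^{1}G^{2}_{t-s}(x,y)\,dy\,ds\right)^{\rho/2}.
\end{equation*}
Since $\sum_{i\geq 1}h_i^{2}(y)=y\leq 1$, we have $r_k(y)\downarrow 0$ pointwise, and monotone convergence forces $\int_0^1 r_k(y)\,dy\to 0$; combined with the integrability of $G_{t-s}^{2}(x,\cdot)$ inherited from \eqref{2.5}, this delivers the required vanishing uniformly in $t\in[0,T]$. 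Plugging back, I conclude $\|v_n^{k,\epsilon}\|_{B_{T,\delta,\rho}}\to 0$ as $k\to\infty$ for each fixed $n$ and $\rho\geq 2$.

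Finally, to produce a single full-measure set $\hat\Omega^0$ valid for every $n$ and every $\rho\geq 2$, I extract by a diagonal procedure an increasing subsequence $k_{n'}$ with $\sup_{t\in[0,T]}\mathbb{E}\|v_n^{k_{n'},\epsilon}(t)\|_\rho^\rho\leq 2^{-n'}$ for all $n\leq n'$ and all rational $\rho\in[2,n']$. Chebyshev's inequality together with the Borel--Cantelli lemma yields the full-measure set $\hat\Omega^0$ on which $\|v_n^{k_{n'},\epsilon}(t)\|_\rho\to 0$ for every rational $t\in[0,T]$, every such $n$, and every such $\rho$, and $L^\rho$-continuity of the trajectories extends the conclusion to all $t\in[0,T]$ and all $\rho\geq 2$. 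The principal obstacle is the tail estimate for $R_n^{k,\epsilon}$: one must reorganize the B--D--G bound so that the vanishing tail $r_k$ appears explicitly, and this is made possible only by the boundedness of $\sigma_n$ --- precisely why the paper introduces the approximating sequence $\sigma_n$ rather than attempting such convergence with the locally bounded original coefficient $\sigma$.
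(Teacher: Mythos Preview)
Your proposal is correct and follows essentially the same strategy as the paper: subtract the two mild formulations, control the three Lipschitz pieces (from $f_n$, $g_n$, and the matched part of $\sigma_n$) by a Gr\"onwall-type mechanism, show that the stochastic tail over the modes $i>k$ vanishes as $k\to\infty$, and then upgrade to almost sure convergence via Chebyshev and Borel--Cantelli.

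There are two minor technical differences worth noting. First, you close the estimate using the weighted norm $\|\cdot\|_{B_{T,\delta,\rho}}$ and an absorption argument, whereas the paper applies Gr\"onwall's inequality directly to $t\mapsto\mathbb{E}\|u_n^\epsilon(t)-u_n^{k,\epsilon}(t)\|_\rho^\rho$; these are equivalent devices, with Gr\"onwall being slightly more direct here since no fixed-point structure is needed. Second, for the tail term the paper bounds $\sum_{i>k}h_i^2(y)$ by the scalar quantity $\sum_{i>k}h_i^2(1)$ and then chooses $k_{n'}$ so that this tail is below $2^{-4n'}$, while you extract the factor $\int_0^1 r_k(y)\,dy$ via Cauchy--Schwarz and invoke monotone convergence; your route is a bit more robust in that it does not rely on any monotonicity of $y\mapsto h_i^2(y)$. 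Your diagonal extraction over $n$ and rational $\rho$ is also somewhat more careful than the paper's, which simply takes $\hat\Omega^0=\bigcap_{n\ge 1}\Omega^{(n)}$ for a fixed $\rho$.
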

\begin{proof}
  Note that we need only to consider $n\in\mathbb{Z}^{+}$.  Let  \eqref{3.16*} minus  \eqref{Degenerate equation1},  we have
  \begin{equation}
    \begin{aligned}
      u_{n}^\epsilon(t,x)-u_n^{k,\epsilon}(t,x)=&\int_{0}^{t}\int_{0}^{1}G_{t-s}(x,y)\left(f_{n}(s,y,u^\epsilon_n(s,y))-f_{n}(s,y,u^{k,\epsilon}_n(s,y)\right)dyds\\
      &-\int_{0}^{t}\int_{0}^{1}\partial_{y}G(t-s)(x,y)\left(g_{n}(s,y,u^\epsilon_{n}(s,y))-g_{n}(s,y,u_{n}^{k,\epsilon}(s,y))\right)dyds\\
      &+\sum_{i=1}^{k}\sqrt{\epsilon}\int_{0}^{t}\int_{0}^{1}G_{t-s}(x,y)\left(\sigma_{n}(s,y,u^{\epsilon}_{n}(s,y))-\sigma_{n}(s,y,u^{k,\epsilon}_{n}(s,y))\right)h_i(y)dydw_{i}(s)\\
      &+\sum_{i=k+1}^{\infty}\sqrt{\epsilon}\int_{0}^{t}\int_{0}^{1}G_{t-s}(x,y)\sigma_{n}(s,y,u^\epsilon_{n}(s,y))h_{i}(y)dydw_{i}(s)\\
      =:&\Delta_{1}(t,x)+\Delta_{2}(t,x)+\Delta_{3}(t,x)+\Delta_{4}(t,x).
    \end{aligned}
  \end{equation}
Thus,
\begin{equation}\label{4}
  \begin{aligned}
    \mathbb{E}\|u(t)-u_n^{k,\epsilon}(t)\|_{\rho}^{\rho}\leq \mathbb{E}\|\Delta_{1}(t)\|_{\rho}^{\rho}+\mathbb{E}\|\Delta_{2}(t)\|_{\rho}^{\rho}+\mathbb{E}\|\Delta_{3}(t)\|_{\rho}^{\rho}+\mathbb{E}\|\Delta_{4}(t)\|_{\rho}^{\rho}.
  \end{aligned}
\end{equation}
By Minkowski's inequality, Young's inequality, H\"{o}lder's inequality, \eqref{2.5}  and  the property of  global Lipshitz  to $f_{n}$, we have
\begin{equation}\label{5}
  \begin{aligned}
    \mathbb{E}\left\|\Delta_1(t)\right\|^{\rho}_{\rho}&\leq\mathbb{E}\left[\int_{0}^{t}\left(\int_{0}^{1}\left(\int_{0}^{1}G_{t-s}(x-y)\left(f_{n}(s,y,u^\epsilon_{n}(s,y))-f_{n}(s,y,u_{n}^{k,\epsilon}(s,y))\right)dy\right)^{\rho}dx\right)^{\frac{1}{\rho}}ds\right]^{\rho}\\
    &\leq \mathbb{E}\left[\int_{0}^{t}\|G_{t-s}(\cdot)\|_{1}\|f(s,\cdot,u^\epsilon_n(s))-f_n(s,\cdot,u_{n}^{k,\epsilon}(s))\|_{\rho}ds\right]^{\rho}\\
    &\leq C(n,\rho,T)\int_{0}^{t}\mathbb{E}\|u_{n}(s)-u_{n}^{k,\epsilon}(s)\|_{\rho}^{\rho}ds.
  \end{aligned}
\end{equation}
For $\Delta_{2}(t,x)$,  By \eqref{2.6}, the property of global Lipshitz  to  $g_{n}$   and similar inequalities as above, we obtain
\begin{equation}\label{6}
  \begin{aligned}
    \mathbb{E}\|\Delta_{2}(t)\|_{\rho}^{\rho} &\leq\mathbb{E}\left[\int_{0}^{t}\left(\int_{0}^{1}\left(\int_{0}^{1}\partial_{y}G_{t-s}(x-y)\left(g_{n}(s,y,u^\epsilon_{n}(s,y))-g_{n}(s,y,u_{n}^{k,\epsilon}(s,y))\right)dy\right)^{\rho}dx\right)^{\frac{1}{\rho}}ds\right]^{\rho}\\
    &\leq \mathbb{E}\left[\int_{0}^{t}\|\partial_yG_{t-s}(\cdot)\|_{1}\|f(s,\cdot,u^\epsilon_n(s))-f_n(s,\cdot,u_{n}^{k,\epsilon}(s))\|_{\rho}ds\right]^{\rho}\\
    &\leq C(n,\rho) \mathbb{E}\left[\int_{0}^{t}(t-s)^{-\frac{1}{2}}\|u^\epsilon_{n}(s)-u_{n}^{k,\epsilon}(s)\|_{\rho}ds\right]^{\rho}\\
    &\leq C(n,\rho,T)\int_{0}^{t}(t-s)^{-\frac{1}{2}}\mathbb{E}\|u^\epsilon_{n}(s)-u_{n}^{k,\epsilon}(s)\|^{\rho}_{\rho}ds.
  \end{aligned}
\end{equation}
 Applying  B-D-G inequality,  Fubini's Theorem,  H\"{o}lder's inequality, Minkowski's inequality Young's inequality, \eqref{2.5} and the property of global Lipschitz  to $\sigma_n$ to  $\Delta_{3}(t,x)$, we get
 \begin{equation}\label{7}
  \begin{aligned}
    \mathbb{E}\|\Delta_3(t)\|_{\rho}^{\rho}&\leq\epsilon^{\frac{\rho}{2}} \mathbb{E}\int_{0}^{1}\left(\int_{0}^{t}\sum_{i=1}^{k}\left(\int_{0}^{1}G_{t-s}(x,y)\left(\sigma_{n}(s,y,u^{\epsilon}_{n}(s,y))-\sigma_{n}(s,y,u^{k,\epsilon}_{n}(s,y))\right)h_i(y)dy\right)^{2}ds\right)^{\frac{\rho}{2}}dx\\
    &\leq\epsilon^{\frac{\rho}{2}} \mathbb{E}\int_{0}^{1}\left(\int_{0}^{t}\int_{0}^{1}G^2_{t-s}(x,y)\left(\sigma_{n}(s,y,u^{\epsilon}_{n}(s,y))-\sigma_{n}(s,y,u^{k,\epsilon}_{n}(s,y))\right)^2dyds\right)^{\frac{\rho}{2}}dx\\
    &\leq\epsilon^{\frac{\rho}{2}} \mathbb{E}\int_{0}^{1}\left(\int_{0}^{t}\int_{0}^{1}G^2_{t-s}(x,y)\left(\sigma_{n}(s,y,u^{\epsilon}_{n}(s,y))-\sigma_{n}(s,y,u^{k,\epsilon}_{n}(s,y))\right)^2dyds\right)^{\frac{\rho}{2}}dx\\
    &\leq\epsilon^{\frac{\rho}{2}} \mathbb{E}\left[\int_{0}^{t}\|G_{t-s}(\cdot)\|_{2}^{2}\|\sigma_{n}(s,\cdot,u^{\epsilon}_{n}(s))-\sigma_{n}(s,\cdot,u^{k,\epsilon}_{n}(s))\|^{2}_{\rho}ds \right]^{\frac{\rho}{2}}\\
    &\leq  C(n,\rho,T,\epsilon)   \int_{0}^{t}(t-s)^{-\frac{1}{2}}\mathbb{E}\|u^{\epsilon}_{n}(s)-u_{n}^{k,\epsilon}(s)\|^{\rho}_{\rho}ds.
  \end{aligned}
 \end{equation}
For $\Delta_{4}(t,x)$,  by Fubini's theorem,  B-D-G inequality, H\"{o}lder's inequality,  the boundedness   of  $\sigma_{n}$
\begin{equation}\label{8}
  \begin{aligned}
    \mathbb{E}\|\Delta_{4}(t)\|_{\rho}^{\rho}&\leq \epsilon^{\frac{\rho}{2}}
    \mathbb{E}\int_{0}^{1}\left(\sum_{i=k+1}^{\infty}\int_{0}^{t}\left(\int_{0}^{1}G_{t-s}(x,y)
    \sigma_n(s,y,u^{\epsilon}_{n}(s,y))h_{i}(y)dy\right)^{2}ds\right)^{\frac{\rho}{2}}dx\\
    &\leq \epsilon^{\frac{\rho}{2}}   \mathbb{E} \int_{0}^{1}\left(\int_{0}^{t}\int_{0}^{1}G_{t-s}^{2}(x,y)\sigma_{n}^2(s,y,u^{\epsilon}_n(s,y))\sum_{i=k+1}^{\infty}h_{i}^2(y)dyds\right)^{\frac{\rho}{2}}dx\\
    &\leq C(n,\rho,T,\epsilon)\left(\sum_{i=k+1}^{\infty}h_{i}^{2}(1)\right)^{\frac{\rho}{2}}.
  \end{aligned}
\end{equation}
Together with  \eqref{4}-\eqref{8}, Gr\"{o}nwall's inequality implies that
\begin{equation}
  \begin{aligned}
    \mathbb{E}\|u_{n}^\epsilon(t)-u_n^{k,\epsilon}(t)\|_{\rho}^{\rho}\leq  C(n,\rho,T,\epsilon)\left(\sum_{i=k+1}^{\infty}h_{i}^2(1)\right)^{\frac{\rho}{2}},
  \end{aligned}
\end{equation}
Note that $\sum_{i=1}^{\infty}h_{i}(y)^{2}=y$, for $y\in [0,1]$, then we need to choose $\{k_{n^\prime}\}_{n^\prime\in\mathbb{Z}^{+}}$such that
$\sum_{i=k_{n^{\prime}}+1}^{\infty}h_{i}^{2}(1)<2^{-4n^{\prime}}$ holds.
By the Chebyshev's  inequality   we have that
\begin{equation}
  \begin{aligned}
    \mathbb{P}(\|u^\epsilon_{n}(t)-u_{n}^{k_{n^\prime},\epsilon}(t)\|_{\rho}\geq 2^{-n^\prime} )&\leq \frac{\mathbb{E}\|u_{n}^{\epsilon}(t)-u_{n}^{k_{n^\prime},\epsilon}(t)\|_{\rho}^{\rho}}{2^{-n^\prime\rho}}
    &\leq C(n,\rho,T,\epsilon)2^{-n^\prime\rho}.
  \end{aligned}
\end{equation}
By the Borel-Cantelli lemma, we conclude that there exists a set $\Omega^{(n)}\subset \Omega, n\in \mathbb{Z}^{+}$  of full
$\mathbb{P}$-measure  and $n^{\prime}_{0}=n_{0}^{\prime}(\omega,n)$ such that for every $\omega\in \Omega^{(n)}$ ,
$$\|u^{\epsilon}_n(t)-u_{n}^{k_{n^\prime},\epsilon}(t)\|_{\rho}\leq 2^{-n^{\prime}}$$
for $n^{\prime}\geq n^{\prime}_{0}$. Furthermore, taking $\hat{\Omega}^{0}=\bigcap_{n\geq 1}\Omega^{(n)}$, then $\mathbb{P}(\hat{\Omega}^{0})=1$.
\end{proof}

\begin{lemma}\label{Lemma 3.1}
For $n\geq 0,\rho\geq 2$, there is a constant $C>0$ such that
\begin{equation}\nonumber
\begin{aligned}
\mathbb{E}(\sup_{0\leq t\leq T}\|u^{\epsilon}_{n}(t)\|^{\rho}_{\rho})\leq C(1+\|\eta_{0,n}\|^{\rho}_{\rho}).
\end{aligned}
\end{equation}
In particular, the random variable $\sup_{0\leq t\leq T}\|u^{\epsilon}_{n}(t)\|_{\rho}^{\rho}$ is bounded in probability uniformly in $n$.
\end{lemma}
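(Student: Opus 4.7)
The plan is to apply It\^o's formula to $\|u_n^{k,\epsilon}(t)\|_\rho^\rho$ for the finite-dimensional degenerate equation \eqref{Degenerate equation}, derive a bound uniform in the truncation parameter $k$, and then pass $k\to\infty$ using Lemma \ref{convergence} and Fatou's lemma. Direct use of It\^o's formula on $u_n^\epsilon$ is not available (as remarked in the introduction), but the cited Lemma 4.3 of \cite{MR1787126} applies to \eqref{Degenerate equation}, which is driven by only finitely many Brownian motions and has a standard Gelfand-triple variational structure.

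Applying It\^o's formula produces the expansion
$$\|u_n^{k,\epsilon}(t)\|_\rho^\rho = \|\eta_{0,n}\|_\rho^\rho + \int_0^t \bigl[I_\Delta(s) + I_g(s) + I_f(s) + I_\sigma(s)\bigr]\, ds + M_k(t),$$
where $M_k$ is a local martingale. The Laplacian contributes the dissipative term $I_\Delta(s) = -\rho(\rho-1)\int_0^1 |u|^{\rho-2}(\partial_x u)^2\, dx \leq 0$. For the $\partial_x g_n$ term, integration by parts with Dirichlet boundary conditions yields $I_g(s) = -\rho(\rho-1)\int_0^1 |u|^{\rho-2} g_n(s,x,u)\, \partial_x u\, dx$; splitting via the structural decomposition $g_n = g_{1,n}+g_{2,n}$ in (H4), the $g_{2,n}$ contribution is an exact $x$-derivative of $H_{2,n}(s,u):=\int_0^u |\tau|^{\rho-2}g_{2,n}(s,\tau)\, d\tau$ and vanishes by Dirichlet, while the $g_{1,n}$ piece (of linear growth) is controlled by Young's inequality, so that half of the dissipation $I_\Delta$ absorbs the resulting term with $|\partial_x u|^2$ and leaves a residual bounded by $C(1+\|u\|_\rho^\rho)$. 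The $f_n$ contribution is bounded by $C(1+\|u\|_\rho^\rho)$ via (H5), and the It\^o correction $I_\sigma$ is likewise bounded by $C(1+\|u\|_\rho^\rho)$ using $|\sigma_n|\leq K(1+|u|)$ together with $\sum_{i=1}^\infty h_i^2(x) = x \leq 1$.

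The quadratic variation of $M_k$ is bounded, via Cauchy--Schwarz in the basis sum $\sum_i h_i(y_1)h_i(y_2)=y_1\wedge y_2$ and H\"older in space, by $C\int_0^t (1+\|u_n^{k,\epsilon}(s)\|_\rho^{2\rho})\, ds$. Burkholder--Davis--Gundy followed by Young's inequality yields
$$\mathbb{E}\sup_{s\leq t}|M_k(s)| \leq \tfrac{1}{2}\mathbb{E}\sup_{s\leq t}\|u_n^{k,\epsilon}(s)\|_\rho^\rho + C\int_0^t \Bigl(1+\mathbb{E}\sup_{r\leq s}\|u_n^{k,\epsilon}(r)\|_\rho^\rho\Bigr) ds.$$
Combining all estimates, rearranging, and invoking Gronwall's lemma gives $\mathbb{E}\sup_{t\leq T}\|u_n^{k,\epsilon}(t)\|_\rho^\rho \leq C(1+\|\eta_{0,n}\|_\rho^\rho)$, with $C$ independent of both $k$ and $n$ (the constants in (H2)--(H5) being $n$-independent by construction of $f_n,g_n,\sigma_n$). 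To remove the truncation, Lemma \ref{convergence} gives $\|u_n^{k_{n'},\epsilon}(t)\|_\rho \to \|u_n^\epsilon(t)\|_\rho$ a.s.\ for each $t$; by continuity of both processes in $t$ with values in $L^\rho$, this upgrades to a.s.\ convergence on a countable dense subset of $[0,T]$, and hence $\sup_t\|u_n^\epsilon(t)\|_\rho^\rho \leq \liminf_{n'\to\infty}\sup_t\|u_n^{k_{n'},\epsilon}(t)\|_\rho^\rho$ a.s. Fatou's lemma then gives $\mathbb{E}\sup_{t\leq T}\|u_n^\epsilon(t)\|_\rho^\rho \leq C(1+\|\eta_{0,n}\|_\rho^\rho)$, and the uniform tightness in $n$ follows from Chebyshev together with $\sup_n\|\eta_{0,n}\|_\rho<\infty$ (since $\eta_{0,n}\to\eta$ in $L^\rho$).

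The principal obstacle is the $\partial_x g_n$ term, whose quadratic growth (from $g_2$) would normally not be controllable by the available Laplacian dissipation; the structural decomposition (H4) combined with the integration-by-parts cancellation through Dirichlet boundary conditions is the essential mechanism that makes the energy estimate close. A secondary technical point is interchanging the $k_{n'}\to\infty$ limit with the $\sup_t$, which is handled by the continuity of the approximants on a countable dense set of times.
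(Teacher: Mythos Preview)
Your proposal is correct and follows essentially the same route as the paper: It\^o's formula on the finite-dimensional approximation $u_n^{k,\epsilon}$ via \cite[Lemma~4.3]{MR1787126}, the key cancellation of the $g_{2,n}$ contribution as an exact $x$-derivative under Dirichlet boundary conditions, absorption of the $g_{1,n}$ piece by the Laplacian dissipation, BDG on the martingale term, and passage $k\to\infty$ through Lemma~\ref{convergence}. The only structural difference is that the paper first establishes $\sup_t\mathbb{E}\|u_n^{k,\epsilon}(t)\|_\rho^\rho$ by a Gronwall argument (without the martingale term) and then bootstraps to $\mathbb{E}\sup_t$ using that preliminary bound to control the time integral arising from BDG, whereas you apply BDG and Gronwall in a single pass to $t\mapsto\mathbb{E}\sup_{r\le t}\|u_n^{k,\epsilon}(r)\|_\rho^\rho$; both are standard and equivalent. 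Your explicit Fatou argument for the $k\to\infty$ passage is in fact more carefully stated than the paper's, which simply invokes Lemma~\ref{convergence} without spelling out the interchange of limit and supremum.
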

\begin{proof}
According to Lemma 4.3, we obtain
\begin{equation}\label{11}
  \begin{aligned}
    \|u_{n}^{k_{n^\prime},\epsilon}(t)\|_{\rho}^{\rho}\leq & \|\eta_{0,n}\|_{\rho}^{\rho}+\rho(\rho-1)\int_{0}^{t}\int_{0}^{1}|u_{n}^{k_{n^\prime},\epsilon}(s,x)|^{\rho-2}\left|\frac{\partial u_{n}^{k_{n^\prime},\epsilon}(s,x)}{\partial x}\right|^{2}dxds\\
    &+\rho\int_{0}^{t}\int_{0}^{1}|u_{n}^{k_{n^\prime},\epsilon}(s,x)|^{\rho-2}u_n^{k_{n^\prime},\epsilon}(s,x)f_{n}(s,x,u_n^{k_{n^\prime},\epsilon}(s,x))dxds\\
    &-\rho(\rho-1)\int_{0}^{t}\int_{0}^{1}|u_{n}^{k_{n^\prime},\epsilon}(s,x)|^{\rho-2}\frac{\partial}{\partial x}u_{n}^{k_{n^\prime},\epsilon}(s,x)g_{n}(s,x,u_{n}^{k_{n^\prime},\epsilon}(s,x))dxds\\
    &+\rho\sqrt{\epsilon}\sum_{i=1}^{k_{n^\prime}}\int_{0}^{t}\int_{0}^{1}|u_{n}^{k_{n^\prime},\epsilon}(s,x)|^{\rho-2}u_{n}^{k_{n^\prime},\epsilon}(s,x)\sigma_{n}(s,x,u_{n}^{k_{n^\prime},\epsilon}(s,x))h_{i}(x)dxdw_{i}(s)\\
    &+\frac{\sqrt{\epsilon}}{2}\rho(\rho-1)\int_{0}^{t}\int_{0}^{1}|u_{n}^{k_{n^\prime},\epsilon}(s,x)|^{\rho-2}\sigma^2_{n}(s,x,u_{n}^{k_{n^\prime},\epsilon}(s,x))dxds\\
    =&\|u_{0,n}\|_{\rho}^{\rho}+\tilde{A}_1(t)+\tilde{A}_2(t)+\tilde{A}_3(t)+\tilde{A}_4(t)+\tilde{A}_5(t)\quad  \mathbb{P}-a.s.
  \end{aligned}
\end{equation}
By the assumption \textbf{(H5)} and Young's inequality, we have
\begin{equation}
\begin{aligned}\label{3.17}
|\tilde{A}_{2}(t)|&\leq\rho\int_{0}^{t}\int_{0}^{1}|f(s,x,u_{n}^{k_{n^\prime},\epsilon}(s,x)||u_{n}^{k_{n^\prime},\epsilon}(s,x)|^{\rho-1}dxds\\
&\leq  C(\rho,K,T)+C(\rho,K,T)\int_{0}^{t}\|u_{n}^{k_{n^\prime},\epsilon}(s)\|_{\rho}^{\rho}ds.
\end{aligned}
\end{equation}
Similarly,
\begin{equation}\label{13}
  \begin{aligned}
    |\tilde{A}_5(t)|\leq C(\rho,K,T,\epsilon)+C(\rho,K,T,\epsilon)\int_{0}^{t}\|u_{n}^{k_{n^\prime},\epsilon}(s)\|_{\rho}^{\rho}ds.
  \end{aligned}
\end{equation}
By the assumption \textbf{(H4)}, we have
\begin{equation}\nonumber
\begin{aligned}
\tilde{A}_{3}(t):=\tilde{A}_{3,1}(t)+\tilde{A}_{3,2}(t),
\end{aligned}
\end{equation}
where
\begin{equation}
  \begin{aligned}
    \tilde{A}_{3,1}(t)=-\rho(\rho-1)\int_{0}^{t}\int_{0}^{1}|u_{n}^{k_{n^\prime},\epsilon}(s,x)|^{\rho-2}\frac{\partial}{\partial x}u_{n}^{k_{n^\prime},\epsilon}(s,x)g_{1,n}(s,x,u_n^{k_{n^\prime},\epsilon}(s,x))dxds
  \end{aligned}
\end{equation}
and
\begin{equation}
  \begin{aligned}
    \tilde{A}_{3,2}(t)=-\rho(\rho-1)\int_{0}^{t}\int_{0}^{1}|u_{n}^{k_{n^\prime},\epsilon}(s,x)|^{\rho-2}\frac{\partial}{\partial x}u_{n}^{k_{n^\prime},\epsilon}(s,x)g_{2,n}(s,u_n^{k_{n^\prime},\epsilon}(s,x))dxds.
  \end{aligned}
\end{equation}
In addition,
\begin{equation}
  \begin{aligned}
    \tilde{A}_{3,2}(t)=-\rho(\rho-1)\int_{0}^{t}\int_{0}^{1}\frac{d}{dx}\int_{0}^{u_{n}^{k_{n^\prime},\epsilon}(s,x)}|r|^{\rho-2}g_{2,n}(s,r)drdxds=0.
  \end{aligned}
\end{equation}
By the assumption \textbf{(H4)} and Young's inequality, we obtain
\begin{equation}
  \begin{aligned}
    |\tilde{A}_3(t)|&=|\tilde{A}_{3,1}(t)|\\
    &\leq \rho(\rho-1)\int_{0}^{t}\int_{0}^{1}|u_{n}^{k_{n^\prime},\epsilon}(s,x)|^{\rho-2}\left|\frac{\partial}{\partial x}u_{n}^{k_{n^\prime},\epsilon}(s,x)\right||g_{1,n}(s,x,u_n^{k_{n^\prime},\epsilon}(s,x))|dxds\\
    &\leq C(\rho,K)\int_{0}^{t}\int_{0}^{1}(1+|u_{n}^{k_{n^\prime},\epsilon}(s,x)|^{2})|u_{n}^{k_{n^\prime},\epsilon}(s,x)|^{\rho-2}dxds\\
    &~~~~+\rho(\rho-1)\int_{0}^{t}\int_{0}^{1}|\partial_x u_{n}^{k_{n^\prime},\epsilon}(s,x)|^{2}|u_{n}^{k_{n^\prime},\epsilon}(s,x)|^{\rho-2}dxds.
  \end{aligned}
\end{equation}
Thus,
\begin{equation}\label{18}
  \begin{aligned}
    \tilde{A}_{1}(t)+ |\tilde{A}_3(t)|\leq C(\rho,K,T)\left(1+\int_{0}^{t}\int_{0}^{1}|u_{n}^{k_{n^\prime},\epsilon}(s,x)|^{\rho}dxds\right).
  \end{aligned}
\end{equation}
Combining \eqref{11}-\eqref{13} and \eqref{18}, we obtain
\begin{equation}
  \begin{aligned}
    \mathbb{E}\|u_{n}^{k_{n^\prime},\epsilon}(t)\|_{\rho}^{\rho}\leq \|\eta_{0,n}\|_{\rho}^{\rho}+ C(\rho,K,T,\epsilon)+C(\rho,K,T,\epsilon)\int_{0}^{t}\mathbb{E}\|u_{n}^{k_{n^\prime},\epsilon}(s)\|_{\rho}^{\rho}ds.
  \end{aligned}
\end{equation}
According to Gr\"{o}nwall's inequality, it follows that
\begin{equation}\label{20}
  \begin{aligned}
    \sup_{t\in[0,T]}\mathbb{E}\|u_{n}^{k_{n^\prime},\epsilon}(t)\|_{\rho}^{\rho}\leq  C(\rho,K,T,\epsilon)(C(\rho,K,T,\epsilon)+\|\eta_{0,n}\|_{\rho}^{\rho}).
  \end{aligned}
\end{equation}
Note that our aim is to estimate $\mathbb{E}\left(\sup_{t\in[0,T]}\|u^{k_{n^\prime},\epsilon}_{n}(t)\|_{\rho}^{\rho}\right)$, based on   \eqref{20}, we  can get the estimate for $\mathbb{E}\left(\sup_{t\in[0,T]}\|u^{k_{n^\prime},\epsilon}_{n}(t)\|_{\rho}^{\rho}\right)$.
Similarly, we only deal with the stochastic term, and the other terms  can be  dealt as above.\\
Applying  to  B-D-G inequality,  Young's inequality, H\"{o}lder's continuity, Young's inequality,  and \eqref{20} to $\tilde{A}_{4}(t,x)$
\begin{equation}
  \begin{aligned}
    \mathbb{E}&\sup_{t\in[0,T]}\left|\rho\sqrt{\epsilon}\sum_{i=1}^{k_{n^\prime}}\int_{0}^{t}\int_{0}^{1}|u_{n}^{k_{n^\prime},\epsilon}(s,x)|^{\rho-2}u_{n}^{k_{n^\prime},\epsilon}(s,x)\sigma_{n}(s,x,u_{n}^{k_{n^\prime},\epsilon}(s,x))h_{i}(x)dxdw_{i}(s)\right|\\
    &\leq\rho\sqrt{\epsilon}\mathbb{E}\left(\sum_{i=1}^{k_{n^\prime}}\int_{0}^{T}\left(\int_{0}^{1}\sigma_{n}(s,x,u_{n}^{k_{n^\prime},\epsilon}(s,x))|u_{n}^{k_{n^\prime},\epsilon}(s,x)|^{\rho-1}h_{i}(x)dx\right)^{2}ds\right)^{\frac{1}{2}}\\
    &\leq C(\rho,K,\epsilon)\mathbb{E}\left( \sum_{i=1}^{k_{n^\prime}} \int_{0}^{T}\left(\int_{0}^{1}(|u_{n}^{k_{n^\prime},\epsilon}(s,x)|^{\rho}+1)h_{i}(x)dx\right)^{2}ds\right)^{\frac{1}{2}}\\
    &\leq  C(\rho,K,T,\epsilon)+ C(\rho,K,\epsilon)\mathbb{E}\left( \sum_{i=1}^{k_{n^\prime}} \int_{0}^{T}\int_{0}^{1}|u_{n}^{k_{n^\prime},\epsilon}(s,x)|^{\rho}h^2_i(x)dx\|u_{n}^{k_{n^\prime},\epsilon}(s)\|_{\rho}^{\rho}ds\right)^{\frac{1}{2}}\\
    &\leq  C(\rho,K,T,\epsilon)+\frac{1}{2}\mathbb{E}\sup_{t\in[0,T]}\|u_{n}^{k_{n^\prime},\epsilon}(s)\|_{\rho}^{\rho} +C(\rho,K,\epsilon)\int_{0}^{T}\mathbb{E}\|u_{n}^{k_{n^\prime},\epsilon}(s)\|_{\rho}^{\rho} ds\\
    &\leq \frac{1}{2}\mathbb{E}\sup_{t\in[0,T]}\|u_{n}^{k_{n^\prime},\epsilon}(s)\|_{\rho}^{\rho}  +C(\rho,K,T,\epsilon)(C(\rho,K,T,\epsilon)+\|\eta_{0,n}\|_{\rho}^{\rho}).
  \end{aligned}
\end{equation}
Hence, we have
$$\mathbb{E}\sup_{t\in[0,T]}\|u_{n}^{k_{n^\prime},\epsilon}(t)\|_{\rho}^{\rho} \leq C(\rho,K,T,\epsilon)(C(\rho,K,T,\epsilon)+\|\eta_{0,n}\|_{\rho}^{\rho}),$$
then  Lemma \ref{convergence}  shows that
\begin{equation}\label{23}
  \begin{aligned}
    \mathbb{E}\sup_{t\in[0,T]}\|u^\epsilon_{n}(t)\|_{\rho}^{\rho}&\leq C(\rho,K,T,\epsilon)(C(\rho,K,T,\epsilon)+\|\eta_{0,n}\|_{\rho}^{\rho})\\
    &\leq C(\rho,K,T,\epsilon)(1+\|\eta_{0,n}\|_{\rho}^{\rho}).
  \end{aligned}
\end{equation}
By the above  \eqref{23},  the random variable $\sup_{t\in[0,T]}\|u_{n}(t)\|_{\rho}^{\rho}$   is bounded  in probability, uniformly  in $n$.
\end{proof}
The next Lemma is used in proving  the tightness of $J_{2,n}^{\epsilon}$ and $J_{3,n}^{\epsilon}$ in equation \eqref{3.16*}, it can be found in \cite{MR1608641}.
\begin{lemma}\label{Tight tool}
Let $\zeta_{n}(t,x)$ be a sequence of random fields defined on $[0,T]$ such that $\sup_{t\in[0,T]}\|\zeta_{n}(t,\cdot)\|_{q} \\\leq\theta_{n}$ for any $\rho\in[1,\infty)$ and $q\in[1,\rho)$, where $\theta_{n}$ is a finite random variable. If $\theta_{n}$ is uniformly bounded in probability. Then
the sequence $J(\zeta_{n}):=\int_{0}^{t}\int_{0}^{1}R(r,t,x,y)\zeta_{n}(r,y)dydr$, where $R(r,t,x,y)=G(r,t,x,y)$ or  $R(r,t,x,y)=\partial_{y}G(r,t,x,y)$, is uniformly tight in $C([0,T];L^{\rho}([0,1]))$.
\end{lemma}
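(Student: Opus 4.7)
The approach I would take is the standard criterion for tightness of a sequence of random variables in a Polish space: for every $\eta>0$, exhibit a deterministic compact subset $K_{\eta}\subset C([0,T];L^{\rho}([0,1]))$ such that $\mathbb{P}(J(\zeta_{n})\in K_{\eta})\ge 1-\eta$ for every $n$. Since $\{\theta_{n}\}$ is uniformly bounded in probability, I can first pick $M=M(\eta)$ with $\mathbb{P}(\theta_{n}>M)\le \eta$ uniformly in $n$; the task is then to show that on the event $\{\theta_{n}\le M\}$ the random path $J(\zeta_{n})$ is automatically contained in a deterministic compact set depending only on $M$.

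On $\{\theta_{n}\le M\}$ one has pathwise $\sup_{r\in[0,T]}\|\zeta_{n}(r,\cdot)\|_{q}\le M$ for every admissible $q\in[1,\rho)$, hence $\bigl(\int_{0}^{T}\|\zeta_{n}(r,\cdot)\|_{q}^{\gamma}\,dr\bigr)^{1/\gamma}\le M T^{1/\gamma}$ for any $\gamma\in[1,\infty)$. Fixing $q$ slightly smaller than $\rho$ (so that $k=1+\tfrac{1}{\rho}-\tfrac{1}{q}$ is close to $1$) and then $\alpha\in(0,\tfrac{k}{2})$, $\beta\in(0,k)$, $\gamma,\delta$ above the thresholds of Lemma~\ref{lemma 2.1}, I would apply its three estimates to $v=\zeta_{n}$. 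This gives deterministic constants $C_{1},C_{2},C_{3}$ (depending on $T,\rho,q,\alpha,\beta,\gamma,\delta$) such that, pathwise on $\{\theta_{n}\le M\}$,
\begin{gather*}
\sup_{t\in[0,T]}\|J(\zeta_{n})(t,\cdot)\|_{\rho}\le C_{1}M,\\
\|J(\zeta_{n})(t,\cdot)-J(\zeta_{n})(s,\cdot)\|_{\rho}\le C_{2}M|t-s|^{\alpha}\ \ (0\le s\le t\le T),\\
\|J(\zeta_{n})(t,\cdot)-J(\zeta_{n})(t,\cdot+z)\|_{\rho}\le C_{3}M|z|^{\beta}\ \ (z\in[0,1]).
\end{gather*}

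Now define $K_{M}\subset C([0,T];L^{\rho}([0,1]))$ to be the set of $u$ satisfying these three deterministic inequalities with right-hand sides $C_{1}M$, $C_{2}M|t-s|^{\alpha}$ and $C_{3}M|z|^{\beta}$ respectively. Compactness of $K_{M}$ I would verify via Arzel\`a-Ascoli applied to the Polish space $C([0,T];L^{\rho}([0,1]))$: the time-H\"older estimate is exactly equicontinuity in $t$, uniformly over $u\in K_{M}$; and for relative compactness of the $t$-slice $\{u(t,\cdot):u\in K_{M}\}\subset L^{\rho}([0,1])$ the Kolmogorov-Riesz-Fr\'echet criterion applies, using the uniform $L^{\rho}$-bound together with the uniform spatial H\"older modulus (after extending functions to $\mathbb{R}$ by zero outside $[0,1]$, which is legitimate because the Dirichlet traces of $G_{t-s}$ and $\partial_{y}G_{t-s}$ force $u(t,0)=u(t,1)=0$). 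Hence $K_{M}$ is compact, and $\{\theta_{n}\le M\}\subset\{J(\zeta_{n})\in K_{M}\}$ yields $\mathbb{P}(J(\zeta_{n})\notin K_{M})\le\eta$ uniformly in $n$, which is the desired tightness.

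The only real obstacle is a bookkeeping one: the exponents $\rho,q,k,\gamma,\delta,\alpha,\beta$ must simultaneously obey $k=1+\tfrac{1}{\rho}-\tfrac{1}{q}$, $\gamma>2k^{-1}$, $\gamma>(\tfrac{k}{2}-\alpha)^{-1}$ and $\delta>2(k-\beta)^{-1}$. All of these are met by letting $q\nearrow\rho$ (so $k\nearrow 1$) and keeping $\alpha,\beta$ strictly inside $(0,k/2)$ and $(0,k)$, and the two cases $R=G$ and $R=\partial_{y}G$ are handled uniformly by the lemma. Notice also that the argument uses only the random $L^{\infty}_{t}L^{q}_{x}$ bound on $\zeta_{n}$, so the lemma is tailor-made to be applied to $\zeta_{n}(s,y)=f_{n}(s,y,u^{\epsilon}_{n}(s,y))$ and $\zeta_{n}=g_{n}(s,y,u^{\epsilon}_{n})$ via the growth assumptions \textbf{(H4)}, \textbf{(H5)} and the uniform bound from Lemma~\ref{Lemma 3.1}.
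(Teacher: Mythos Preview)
The paper does not supply its own proof of this lemma: immediately before the statement it writes ``it can be found in \cite{MR1608641}'' and moves on. So there is no in-paper argument to compare against; your task was effectively to reconstruct the (standard) proof from Gy\"ongy's paper.

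Your reconstruction is correct and is precisely the expected route: reduce to the event $\{\theta_n\le M\}$, feed the pathwise bound $\sup_{r}\|\zeta_n(r,\cdot)\|_q\le M$ into the three deterministic estimates of Lemma~\ref{lemma 2.1} to obtain a uniform $L^\rho$ bound, a uniform time-H\"older modulus, and a uniform spatial $L^\rho$-translation modulus, and then invoke Arzel\`a--Ascoli in $C([0,T];L^\rho)$ together with the Kolmogorov--Riesz--Fr\'echet criterion for the $t$-slices. Your bookkeeping on the exponents $k,\gamma,\delta,\alpha,\beta$ is accurate.

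One small point worth flagging: Lemma~\ref{lemma 2.1} as stated in the paper is formulated only for the kernels $H=\partial_y G_{t-s}$ and $H=\partial_x\partial_y G_{t-s}$, whereas the present lemma also allows $R=G_{t-s}$. This is harmless---the heat kernel $G$ has a strictly milder singularity than $\partial_y G$, so the three inequalities of Lemma~\ref{lemma 2.1} hold \emph{a fortiori} with $H=G$ (indeed with larger admissible ranges of $\alpha,\beta$)---but you should say so explicitly rather than silently apply the lemma outside its stated scope.
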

For $J_{1,n}$, we know that it is tight by convolution operator $G_{t}$ is uniformly  bounded. Finally, we need to illustrate that $J_{4,n}^{\epsilon}$ has also such property.
\begin{lemma}\label{tightness of stochatic term}
For the previous initial data $\eta_{0,n}\in L^{\rho}([0,1]),\rho> 6$, the stochastic convolution  term $J^{\epsilon}_{4,n}$ is uniformly tight in $C([0,T]\times[0,1])$.
\end{lemma}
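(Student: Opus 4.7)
The plan is to apply Kolmogorov's tightness criterion in the two parameters $(t,x)\in[0,T]\times[0,1]$. Since $J_{4,n}^{\epsilon}(0,\cdot)\equiv 0$, it suffices to establish, uniformly in $n$, an estimate of the form
\begin{equation*}
\mathbb{E}\bigl|J_{4,n}^{\epsilon}(t_1,x_1)-J_{4,n}^{\epsilon}(t_2,x_2)\bigr|^{p}\le C\bigl(|t_1-t_2|^{\alpha}+|x_1-x_2|^{\beta}\bigr)
\end{equation*}
with $\alpha,\beta>2$ for some sufficiently large $p\ge 2$, so that a joint Kolmogorov theorem yields uniformly H\"older continuous modifications.

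For the time increment (fix $x$, $t_1>t_2$), I would split $J_{4,n}^{\epsilon}(t_1,x)-J_{4,n}^{\epsilon}(t_2,x)$ into a stochastic integral over $[0,t_2]$ carrying the kernel \emph{difference} $G_{t_1-u}(x,\cdot)-G_{t_2-u}(x,\cdot)$ and a stochastic integral over $[t_2,t_1]$ carrying $G_{t_1-u}(x,\cdot)$ itself. Applying the Burkholder-Davis-Gundy inequality (as was done for $A_4^{\epsilon}$ in the proof of Theorem \ref{Theorem 3.1}) to each piece produces terms of the form $\mathbb{E}\bigl(\int\!\int\Gamma^{2}\sigma_{n}^{2}\,dy\,du\bigr)^{p/2}$, with $\Gamma$ either the kernel difference or the kernel itself. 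I would then apply H\"older's inequality in $(u,y)$ with conjugate exponents $q=\rho/(\rho-2)$ and $q'=\rho/2$ to decouple kernel and coefficient,
\begin{equation*}
\int\!\int\Gamma^{2}\sigma_{n}^{2}\,dy\,du\le \Bigl(\int\!\int|\Gamma|^{2\rho/(\rho-2)}\,dy\,du\Bigr)^{(\rho-2)/\rho}\Bigl(\int\!\int\sigma_{n}^{\rho}\,dy\,du\Bigr)^{2/\rho}.
\end{equation*}
The key point is that $2\rho/(\rho-2)\in(2,3)$ precisely under (H1), which activates the Green-function bounds in Lemma \ref{lemma 2.1}; these deliver $C|t_1-t_2|^{(\rho-6)/(\rho-2)}$ for the kernel-difference piece and the analogous short-time bound for the $[t_2,t_1]$ piece, while the space-variable estimate $\int_{0}^{T}\!\int_{0}^{1}|G_{u}(x_1,z)-G_{u}(x_2,z)|^{p}dz\,du\le C|x_1-x_2|^{3-p}$ handles the spatial increment.

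For the coefficient integral, the linear growth in (H2) gives $\int_{0}^{T}\!\int_{0}^{1}\sigma_{n}^{\rho}\,dy\,du\le C\bigl(1+\int_{0}^{T}\|u_{n}^{\epsilon}(u)\|_{\rho}^{\rho}\,du\bigr)$, whose every positive moment is $n$-uniformly controlled by Lemma \ref{Lemma 3.1} (if $p/\rho>1$, one applies Lemma \ref{Lemma 3.1} at a larger Lebesgue exponent, which is legitimate because the smoothed initial data $\eta_{0,n}$ are bounded in every $L^{\rho'}$). Assembling these bounds and raising to the $p/2$ power, I obtain increment estimates with exponents $p(\rho-6)/(4\rho)$ in time and $p(\rho-6)/(2\rho)$ in space. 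Choosing $p>8\rho/(\rho-6)$ forces both exponents to exceed $2$, after which the joint Kolmogorov continuity theorem yields tightness of $\{J_{4,n}^{\epsilon}\}$ in $C([0,T]\times[0,1])$.

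The principal obstacle is the calibration of the conjugate exponents $q,q'$: the Green-function integrability demanded by Lemma \ref{lemma 2.1} forces $2q<3$, while controlling $\sigma_{n}^{2q'}$ through the only available moment bound on $u_{n}^{\epsilon}$ forces $2q'\le\rho$. The window $q=\rho/(\rho-2)$, $q'=\rho/2$ realising both constraints \emph{simultaneously} opens exactly when $\rho>6$ and collapses as $\rho\downarrow 6$; the whole argument therefore pivots on assumption (H1) and on verifying that the resulting H\"older exponents in $t$ and $x$ remain strictly positive and can be amplified beyond $2$ by taking $p$ large.
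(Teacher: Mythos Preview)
Your proof tracks the paper's almost line for line: the same time-increment decomposition into a kernel-difference integral on $[0,t_2]$ and a short-time integral on $[t_2,t_1]$, BDG, the H\"older split with conjugate exponents lying in the window that Lemma~\ref{lemma 2.1} and the linear growth of $\sigma$ force open precisely when $\rho>6$, and closure via Lemma~\ref{Lemma 3.1}. The only substantive difference is that the paper works at moment order $\rho$ and concludes tightness through a Chebyshev/Arzel\`a--Ascoli argument in the parabolic metric, whereas you invoke the two-parameter Kolmogorov criterion and must therefore push the moment order $p$ above $8\rho/(\rho-6)$.

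One caveat on that last step: your justification for the higher moments---``apply Lemma~\ref{Lemma 3.1} at a larger Lebesgue exponent because the smoothed data $\eta_{0,n}$ are bounded in every $L^{\rho'}$''---produces a constant $C(1+\|\eta_{0,n}\|_{\rho'}^{\rho'})$, and nothing guarantees $\sup_n\|\eta_{0,n}\|_{\rho'}<\infty$ when the limiting datum $\eta$ lies only in $L^{\rho}$. The cleaner fix is to keep the Lebesgue exponent at $\rho$ but raise the almost-sure energy inequality~\eqref{11} to the $m$-th power before taking expectations; BDG at exponent $m$ on the martingale term then yields $\mathbb{E}\sup_{t}\|u_n^{\epsilon}(t)\|_\rho^{m\rho}\le C\bigl(1+\|\eta_{0,n}\|_\rho^{m\rho}\bigr)$, which \emph{is} uniform in $n$ and suffices to control $\mathbb{E}\bigl(\int\!\int\sigma_n^{\rho}\bigr)^{p/\rho}$ for arbitrary $p$.
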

\begin{proof}
The proof of the Lemma is an application of Arzela-Ascoli theorem or Aldous' tightness criterion.  We first show the uniform boundedness of the stochastic terms $J^{\epsilon}_{4,n}$.
For any $x\in[0,1], t\in[0,T]$, using Burkholder-Davis-Gundy inequality, H\"{o}lder's inequality and  H\"{o}lder's inequality  with exponent $p$ and $q\leq \frac{\rho}{2}$, Lemma \ref{Lemma 3.1}, we have
\begin{equation}
\begin{aligned}
&\mathbb{E}\left|\int_{0}^{t}\int_{0}^{1}G_{t-s}(x,y)\sigma(s,y,u_{n}^{\epsilon}(s,y))W(dy,ds)\right|^{\rho}\leq\mathbb{E}\left|\int_{0}^{t}\int_{0}^{1}G^{2}_{t-s}(x,y)\sigma^{2}(s,y,u_{n}^{\epsilon}(s,y))ydyds\right|^{\frac{\rho}{2}}\\
&~~\leq \left(\int_{0}^{t}\int_{0}^{1}G_{t-s}^{2p}(x,y)dyds\right)^{\frac{\rho}{2p}}\mathbb{E}\left(\int_{0}^{t}\int_{0}^{1}\sigma^{2q}(s,y,u_{n}^{\epsilon}(s,y))dyds\right)^{\frac{\rho}{2q}}\\
&~~\leq t^{\frac{(3-2p)\rho}{4p}}t^{\frac{\rho}{2q}-1}\mathbb{E}\int_{0}^{t}\int_{0}^{1}\sigma^{\rho}(s,y,u^{\epsilon}_{n}(s,y))dyds\\
&~~\leq C(\rho,T)(1+\mathbb{E}(\sup_{t\in[0,T]}\|u_{n}^{\epsilon}(t)\|_{\rho}^{\rho}))\leq C(\rho,T,\eta_{0,n}).
\end{aligned}
\end{equation}
And then, we need to illustrate the continuity of the variable $t$ and $x$. For any $0\leq s<t\leq T$, we have
\begin{equation}
\begin{aligned}
&\int_{0}^{t}\int_{0}^{1}G_{t-s^{\prime}}(x,y)\sigma(s^{\prime},y,u_{n}^{\epsilon}(s^{\prime},y))W(ds^{\prime},dy)-\int_{0}^{s}\int_{0}^{1}G_{s-s^{\prime}}(x,y)\sigma(s^{\prime},y,u_{n}^{\epsilon}(s^{\prime},y))W(ds^{\prime},dy)\\
&~~=\int_{s}^{t}\int_{0}^{1}G_{t-s^{\prime}}(x,y)\sigma(s^{\prime},y,u_{n}^{\epsilon}(s^{\prime},y))W(ds^{\prime},dy)\\
&~~~~+\int_{0}^{s}\int_{0}^{1}(G_{t-s^{\prime}}(x,y)-G_{s-s^{\prime}}(x,y))\sigma(s^{\prime},y,u_{n}^{\epsilon}(s^{\prime},y))W(ds^{\prime},dy)\\
&~~=: I_{1}+I_{2}.
\end{aligned}
\end{equation}
Using Burkholder-Davis-Gundy inequality, H\"{o}lder's inequality, Lemma \ref{lemma 2.1} and Lemma \ref{Lemma 3.1}, we obtain
\begin{equation}
\begin{aligned}
\mathbb{E}|I_{2}|^{\rho}&\leq C(T,\rho)|t-s|^{\frac{(3-2p)\rho}{2p}}\mathbb{E}\left(\int_{0}^{s}\int_{0}^{1}\sigma^{2q}(s^{\prime},y,u^{\epsilon}_{n}(s^{\prime},y))dyds\right)^{\frac{\rho}{2q}}\\
&\leq C(T,\rho)|t-s|^{\frac{(3-2p)\rho}{4p}}\mathbb{E}\int_{0}^{s}\int_{0}^{1}\sigma^{\rho}(s^{\prime},y,u^{\epsilon}_{n}(s^{\prime},y))ds^{\prime}dy\\
&\leq C(T,\rho)|t-s|^{\frac{(3-2p)\rho}{4p}}(1+\mathbb{E}\sup_{0\leq s\leq T}\|u_{n}^{\epsilon}(s)\|_{\rho}^{\rho})\leq C(T,\rho,\eta_{0,n})|t-s|^{\frac{(3-2p)\rho}{4p}},
\end{aligned}
\end{equation}
where $\frac{1}{p}+\frac{1}{q}=1$,$\frac{\rho}{2q}>1$ and $1<2p<3$, thus $\rho>6$. For term $I_{2}$, using the same estimate, we have
\begin{equation}
\begin{aligned}
\mathbb{E}|I_{1}|^{\rho}&\leq C(T,\rho)\left(\int_{s}^{t}\int_{0}^{1}G_{t-s}^{2p}(x,y)dyds\right)^{\frac{\rho}{2p}}\mathbb{E}\left(\int_{s}^{t}\int_{0}^{1}\sigma^{2q}(s^{\prime},y,u^{\epsilon}_{n}(s^{\prime,y})) dyds\right)^{\frac{\rho}{2q}}\\
&\leq C(T,\rho)|t-s|^{\frac{(3-2p)\rho}{4p}}(1+\mathbb{E}\sup_{0\leq s\leq T}\|u_{n}^{\epsilon}(s)\|_{\rho}^{\rho})\leq C(T,\rho,\eta_{0,n})|t-s|^{\frac{(3-2p)\rho}{4p}},
\end{aligned}
\end{equation}
where $C(T,\rho,\eta_{0,n})$ is uniformly bounded with respect to $n$, since $\eta_{0,n}$ converge to $\eta$.  For any $0\leq x_{1}<x_{2}\leq 1$, similar to the above estimate,  we obtain the continuity of the space variable,
\begin{equation}
\begin{aligned}
&\mathbb{E}\left|\int_{0}^{t}\int_{0}^{1}\left[G_{t-s}(x_{1},y)-G_{t-s}(x_{2},y)\right]\sigma(s,y,u_{n}^{\epsilon}(s,y))W(dy,ds)\right|^{\rho}\\
&\leq C(\rho,T)|x_{1}-x_{2}|^{\frac{(3-2p)\rho}{2p}}(1+\mathbb{E}\sup_{0\leq s\leq T}\|u_{n}^{\epsilon}(s)\|_{\rho}^{\rho})\leq C(T,\rho,\eta_{0,n})|t-s|^{\frac{(3-2p)\rho}{2p}}.
\end{aligned}
\end{equation}
Hence, Chebyshev's inequality  and  uniform boundedness of the stochastic terms  shows that
\begin{align*}
\lim_{K\rightarrow \infty}\limsup_{n}\mathbb{P}(\sup_{t\in[0,T],x\in[0,1]}|J_{4,n}^\epsilon|\geq K)=0
\end{align*}
Furthermore, under parabolic metric $|(t,x)|=|t|^{\frac{1}{2}}+|x|$, the above continuity tells us that there exists a $\delta(\epsilon)=\varepsilon^{\frac{4p}{3-2p}}C^{-\frac{2p}{\rho(3-2p)}}(T,\rho,\eta_{0,n})$ for any $\varepsilon>0$ such that $\delta<\delta(\epsilon)$
\begin{align*}
\limsup_{n}\mathbb{P}(\sup_{t\in[0,T],x\in[0,1]\atop
 |(t,x)|\leq \delta}|J_{4,n}^\epsilon(t,x)-J_{4,n}^\epsilon(s,y)|\geq \varepsilon)\leq \varepsilon^{\rho},
\end{align*}
namely,
\begin{align*}
\lim_{\delta\rightarrow 0}\limsup_{n}\mathbb{P}(\sup_{t\in[0,T],x\in[0,1]\atop
 |(t,x)|\leq \delta}|J_{4,n}^\epsilon(t,x)-J_{4,n}^\epsilon(s,y)|\geq \varepsilon)=0.
\end{align*}
Thus, $J_{4,n}^\epsilon$ is tight in $C([0,T]\times[0,1])$.
\end{proof}
From the above arguments we know that $u_{n}^{\epsilon}$ is also uniformly tight in $C([0,T];L^{\rho}(0,1))$ for all $n$. Based on the previous results we are able to show the well-posedness  of the equation \eqref{1.1}. Hence we have the following result.
\begin{theorem}\label{existence and unique}
Let $f,g,\sigma$ satisfy the assumption \textbf{H}, For any $\rho>6$ and initial data $\eta(x)\in L^{\rho}([0,1])$, equation \eqref{1.1} has a unique $\mathcal{F}_t$-adapted $L^{\rho}([0,1])$-valued continuous solution.
\end{theorem}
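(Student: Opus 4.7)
The plan is to extract the global solution from the approximating sequence $\{u^{\epsilon}_n\}_{n\in\mathbb{N}}$ constructed in Theorem \ref{Theorem 3.1}, by combining the uniform estimate of Lemma \ref{Lemma 3.1} with the tightness results in Lemmas \ref{Tight tool}--\ref{tightness of stochatic term}. First I would recall that $u_n^\epsilon$ solves the truncated equation \eqref{3.16} and admits the mild decomposition \eqref{3.16*} into the four terms $J_{1,n}, J_{2,n}^\epsilon, J_{3,n}^\epsilon, J_{4,n}^\epsilon$. The deterministic term $J_{1,n}$ converges in $C([0,T]; L^\rho([0,1]))$ because $\eta_{0,n}\to\eta$ in $L^\rho$ and the convolution with $G_t$ is bounded by \eqref{2.5}. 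For $J_{2,n}^\epsilon$ and $J_{3,n}^\epsilon$, the linear growth of $f$ in (H5) and the quadratic growth of $g$ in (H4), together with the uniform moment bound of Lemma \ref{Lemma 3.1}, show that $f_n(s,\cdot,u_n^\epsilon)$ and $g_n(s,\cdot,u_n^\epsilon)$ are uniformly bounded in probability in a suitable $L^q$ (using $\rho>6$), so Lemma \ref{Tight tool} yields tightness of these two terms in $C([0,T];L^\rho([0,1]))$. Lemma \ref{tightness of stochatic term} supplies the tightness of $J_{4,n}^\epsilon$. Hence the laws of $\{u_n^\epsilon\}$ are tight in $C([0,T];L^\rho([0,1]))$.

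Next I would apply Prokhorov's theorem to extract a weakly convergent subsequence and invoke the Skorokhod representation theorem to obtain, on an auxiliary probability space $(\widetilde{\Omega},\widetilde{\mathcal F},\widetilde{\mathbb P})$, versions $\widetilde u_n^\epsilon$ and a Brownian sheet $\widetilde W$ such that $\widetilde u_n^\epsilon \to \widetilde u^\epsilon$ almost surely in $C([0,T];L^\rho([0,1]))$. The uniform estimate of Lemma \ref{Lemma 3.1} transfers to $\widetilde u^\epsilon$ via Fatou's lemma, so $\sup_{t\in[0,T]}\|\widetilde u_n^\epsilon(t)\|_\rho$ is uniformly bounded in probability; combined with the locally Lipschitz condition (H3), this lets me pass to the limit in the nonlinear integrands: for $n$ large enough the truncation $f_n,g_n,\sigma_n$ agrees with $f,g,\sigma$ on the relevant range, and Lemma \ref{lemma 2.1} together with dominated convergence yields convergence of $J_{2,n}^\epsilon$ and $J_{3,n}^\epsilon$ to the corresponding limiting integrals. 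The stochastic integral $J_{4,n}^\epsilon$ is handled via the It\^o isometry and (H2), together with the a.s.\ $L^\rho$-convergence of $\widetilde u_n^\epsilon$, after verifying that $\widetilde u^\epsilon$ is adapted to the filtration generated by $\widetilde W$. This shows $\widetilde u^\epsilon$ is a weak mild solution of \eqref{1.1}.

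For uniqueness I would argue by localization. If $u^\epsilon,v^\epsilon$ are two $\mathcal F_t$-adapted $L^\rho$-valued continuous solutions with the same initial datum, set
\[
\tau_R := \inf\{t\ge 0 : \|u^\epsilon(t)\|_\rho \vee \|v^\epsilon(t)\|_\rho \ge R\}\wedge T.
\]
On $[0,\tau_R]$ the cutoff $\chi_R(\|\cdot\|_\rho)$ in \eqref{3.1*} equals $1$, so both processes are solutions of the truncated equation; the Banach fixed point argument of Theorem \ref{Theorem 3.1} forces $u^\epsilon \equiv v^\epsilon$ on $[0,\tau_R]$. Since $\sup_{t\in[0,T]}\|u^\epsilon(t)\|_\rho < \infty$ almost surely (by Lemma \ref{Lemma 3.1} applied to the limit via Fatou), $\tau_R\to T$ as $R\to\infty$, giving pathwise uniqueness on $[0,T]$. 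Combining pathwise uniqueness with the weak existence constructed above, the Yamada--Watanabe theorem upgrades the solution to a strong $\mathcal F_t$-adapted solution on the original probability space.

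The hard part will be the limit passage, since Lemma \ref{tightness of stochatic term} produces tightness of $J_{4,n}^\epsilon$ only in $C([0,T]\times[0,1])$ while $J_{2,n}^\epsilon, J_{3,n}^\epsilon$ live in $C([0,T];L^\rho([0,1]))$, and one cannot use the pointwise uniform convergence employed in \cite{MR1608641}. The $L^\rho$-convergence of the nonlinear integrands must be extracted from the local Lipschitz bound (H3) together with the uniform tightness of $\|\widetilde u_n^\epsilon(t)\|_\rho$ from Lemma \ref{Lemma 3.1}, and the identification of the limiting stochastic integral requires careful verification of adaptedness after the Skorokhod change of probability space. This is the step where the technical hypothesis $\rho>6$ is essential, matching the exponent constraints in Lemmas \ref{lemma 2.1} and \ref{tightness of stochatic term}.
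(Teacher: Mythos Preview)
Your proposal is correct and follows the same overall architecture as the paper: tightness of $\{u_n^\epsilon\}$ via the decomposition \eqref{3.16*} together with Lemmas~\ref{Tight tool}--\ref{tightness of stochatic term}, and pathwise uniqueness by localization with stopping times~$\tau_R$.  The one genuine methodological difference is in the final step from weak to strong existence.  You appeal to Yamada--Watanabe, whereas the paper uses the Gy\"ongy--Krylov lemma (Lemma~1.1 of \cite{MR1392450}): for any \emph{pair} of subsequences $(u_{l(k)}^\epsilon,u_{m(k)}^\epsilon,\partial_x W)$ one passes, via Prokhorov and Skorokhod, to a limit $(\tilde u^\epsilon,\bar u^\epsilon,\partial_x\hat W)$ on an auxiliary space, shows both components satisfy the limiting equation, and then uniqueness forces $\tilde u^\epsilon=\bar u^\epsilon$ a.s.; the Gy\"ongy--Krylov criterion then yields convergence of the \emph{full} sequence $u_n^\epsilon$ in probability on the \emph{original} filtered space, so the limit is automatically $\mathcal F_t$-adapted.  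Both routes are valid; the Gy\"ongy--Krylov trick is self-contained and sidesteps the nontrivial task of verifying a Yamada--Watanabe theorem for SPDEs driven by a Brownian sheet, while your route is the more classical weak-existence-plus-pathwise-uniqueness template.  A minor remark on your uniqueness argument: you do not actually need Lemma~\ref{Lemma 3.1} or Fatou to conclude $\tau_R\to T$; continuity of $t\mapsto\|u^\epsilon(t)\|_\rho$ alone gives $\sup_{t\in[0,T]}\|u^\epsilon(t)\|_\rho<\infty$ a.s., which suffices.
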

\begin{proof}
The uniqueness is very trivial, we can get this fact from Lemma \ref{Lemma 3.1}. Indeed, let $u$ and $v$ are solution of equation $\eqref{1.1}$ and $\tau_{R}^{1}:=\inf\{t\geq 0:\|u(t)\|_{\rho}\geq R\}\wedge T$, $\tau_{R}^{2}:=\inf\{t\geq 0:\|v(t)\|_{\rho}\geq R\}\wedge T$. With the help of Theorem \ref{Theorem 3.1}, we have $u=v$ for any $t\in [0,\tau_{R}^{1}\wedge\tau_{R}^{2}]$. Finally, by Lemma \ref{Lemma 3.1} and Chebyshev's inequality, we have  $P(\tau_{R}^{1}\wedge\tau_{R}^{2}<T)\rightarrow 0$ as $R\rightarrow\infty$. Thus we obtain the uniqueness of the solution on the interval $[0,T]$. \\
Finally, we consider the existence of the solution $u^{\epsilon}$. The main tool is the Lemma 1.1\cite{MR1392450} and Skorohod representation theorem.  In view of Lemma \ref{Tight tool}, \ref{tightness of stochatic term} and $J_{1,n}$ is uniformly tight for $n$,  then $u^\epsilon_n$ is tight. Thus for any two subsequence $u_l^\epsilon$ and $u_m^\epsilon$, then Prokhorov's theorem and Skorohod representation theorem show that there exists subsequence $l(k),m(k)$ of $l,m$, and a sequence of random  elements $z_k:=(\tilde{u}_{k}^\epsilon,\bar{u}_{k}^{\epsilon},\frac{\partial\hat{W}_k}{\partial x})_{k\geq 1}\in\mathbb{E}^{\prime }\times\mathbb{E}^{\prime}\times C([0,T]\times[0,1])$, where $\mathbb{E}^{\prime}:=C([0,T];L^{\rho}([0,1]))$. And $z_k$ converges to  a random element $z:=(\tilde{u}^\epsilon,\bar{u}^{\epsilon},\frac{\partial \hat{W}}{\partial x})$ with probability one on some probability space $(\hat{\Omega},\hat{\mathcal{F}},\hat{\mathbb{P}})$.  Furthermore, the law of $\left(u_{l(k)},u_{m(k)},\frac{\partial W}{\partial x}\right)$ coincide with $z_{k}$. Note that the filtration $\hat{\mathcal{F}}_{t}$ and $\hat{\mathcal{F}}$  are generated by $z(s,x),x\in[0,1]$ and $z_k(s,x),x\in[0,1]$, respectively.  Based on the the Lemma 1.1\cite{MR1392450}, it  needs only to check $z_{k}$ weakly converge. Due to the weak solution is equivalent to the mild solution, its proof can be found in \cite{MR1608641}.  Then, taking the limit $k\rightarrow\infty$ for the weak formulation, we obtain the weak formulation
\begin{align*}
  \int_{0}^{1}\tilde{u}^{\epsilon}(t,x)\varphi(x)dx&=\int_{0}^{1}u_{0}(x)\varphi(x)dx+\int_{0}^{t}\int_{0}^{1}\tilde{u}^{\epsilon}(s,x)\partial_x^2\varphi(x)dxds\\
  &\quad+\int_{0}^{t}\int_{0}^{1}f(s,x,\tilde{u}^{\epsilon}(s,x))\varphi(x)dxds\\
   &\quad-\int_{0}^{t}\int_{0}^{1}g(s,x,\tilde{u}^{\epsilon}(s,x))\partial_x\varphi(x)dxds\\
   &\quad+\int_{0}^{t}\int_{0}^{1}\sigma(s,x,\tilde{u}^{\epsilon}(s,x))\varphi(x)\hat{W}(dx,ds) \quad a.s.
\end{align*}
for all $t\in[0,T]$ on $\left(\hat{\Omega},\hat{\mathcal{F}},\hat{\mathcal{F}}_{t},\hat{\mathbb{P}}\right)$. In addition, $\bar{u}^\epsilon$ also satisfy the above equation, by the uniqueness of the solution, i.e. $\bar{u}^\epsilon=\tilde{u}^\epsilon$  and the main technique we mentioned, then $u_{n}^\epsilon$ converges  to some random element $u \in\mathbb{E}^{\prime}$ in  probability.  Hence, we complete the proof of the existence.
\end{proof}
\section{A criteria for large deviation Principle}
First we give a general framework of the large deviation principle.
\begin{definiton}[Large Deviation Principle]
Let $I:\mathcal{X}\rightarrow [0,\infty]$ be a rate function on Polish space $\mathcal{X}$. The sequence of random variable $\{X^{\epsilon}\}$ satisfies the large principle on $\mathcal{X}$ with rate function $I$ if
\begin{description}
  \item[1] For any closed subset $F\subset \mathcal{X}$,
  \begin{equation}\nonumber
  \limsup_{\epsilon\rightarrow 0}\log \mathbb{P}(X^{\epsilon}\in F)\leq-\inf_{x\in F}I(x).
  \end{equation}
  \item[2] For any open set $G\subset\mathcal{X}$,
  \begin{equation}\nonumber
  \liminf_{\epsilon\rightarrow 0}\log \mathbb{P}(X^{\epsilon}\in G)\geq-\inf_{x\in G}I(x).
  \end{equation}
\end{description}
\end{definiton}
The proof of the large deviation principle of equation \eqref{1.1} is based on the weak convergence method. For this approach, one proves the Laplace principle which is equivalent to the large deviation principle.
\begin{definiton}[Laplace principle]
The sequence of random $\{X^{\epsilon}\}$ on polish space $\mathcal{X}$ is said to satisfy the Laplace principle  with rate function $I$ if for each $h\in C_{b}(\mathcal{X};\mathbb{R})$,
\begin{equation}\nonumber
\lim_{\epsilon\rightarrow 0}\epsilon\log\mathbb{E}\left\{\exp\left[-\frac{h(X^{\epsilon})}{\epsilon}\right]\right\}=-\inf_{x\in\mathcal{X}}\left\{h(x)+I(x)\right\}.
\end{equation}
\end{definiton}
Let $(\Omega,\mathcal{F},\mathbb{P},\mathcal{F}_{t})$ be a filtered probability space which emerges in  previous section, a predictable process $\psi:\Omega\times [0,T]\rightarrow L^{2}([0,T]\times[0,1];\mathbb{R}) $. Introducing the following sets
\begin{equation}\nonumber
\begin{aligned}
&S^{N}:=\left\{\psi\in L^{2}([0,T]\times[0,1];\mathbb{R}):\int_{0}^T\int_{0}^{1}|\psi(s,y)|^{2}dyds\leq N\right\},\quad N\in\mathbb{N},\\
&\mathcal{A}_{2}:=\left\{\psi:\int_{0}^T\int_{0}^{1}|\psi(s,y)|^{2} dyds<\infty\quad\mathbb{P}- \text{a.s.}\right\},\\
&\mathcal{A}_{2}^{N}:=\left\{\psi\in\mathcal{A}_{2}:\int_{0}^{T }\int_{0}^{1}|\psi(s,y)|^{2} dyds\leq N\quad \mathbb{P}-\text{a.s.}\right\},
\end{aligned}
\end{equation}
where $S^{N}$ is a compact metric space, endowed with the weak topology on $L^{2}([0,T]\times[0,1];\mathbb{R})$, $\mathcal{A}_{2}^{N}$ is an admissible control set. Let $\mathcal{X}$ and $\mathcal{X}_{0}$ be Polish spaces. Assume that initial data takes value in a compact subspace of $\mathcal{X}_{0}$ and the solution in space $\mathcal{X}$. For each $\epsilon>0$, let $\mathcal{G}^{\epsilon}:\mathcal{X}_{0}\times C([0,T]\times[0,1])\rightarrow \mathcal{X}$ be a family of measurable maps. Define $Z^{\epsilon,\eta}=\mathcal{G}^{\epsilon}(\eta,\sqrt{\epsilon}\frac{\partial W}{\partial x})$. We state below a sufficient condition for the uniform Laplace principle for the  family $Z^{\epsilon,\eta}$.\\
\textbf{Condition.} There exists a measurable map $\mathcal{G}^{0}:\mathcal{X}_{0}\times C([0,T]\times[0,1])\rightarrow \mathcal{X}$ such that the following conditions hold
\begin{description}
  \item[(a)] For $M\in \mathbb{N}$, let $\psi^{n},\psi\in S^{M}$, and $(\eta^{n},\psi^{n})\rightarrow (\eta,\psi)$ in distribution  as $n\rightarrow\infty$. Then
  $$\mathcal{G}^{0}\left(\eta^{n},\int_{0}^{t}\int_{0}^{x}\psi^{n}(s,y) ds\right)\rightarrow \mathcal{G}^{0}\left(\eta,\int_{0}^{t}\int_{0}^{x}\psi(s,y)dyds\right),$$
  in distribution as $n\rightarrow\infty$.
  \item[(b)] For $M\in \mathbb{N}$, let $\psi^{\epsilon},\psi\in\mathcal{A}_{2}^{N}$ be such that $\psi^{\epsilon}\rightarrow \psi$ in distribution, and $\eta^{\epsilon}\rightarrow\eta$,  as $\epsilon \rightarrow0$. Then
   $$\mathcal{G}^{\epsilon}\left(\eta^{\epsilon},\sqrt{\epsilon}\frac{\partial W}{\partial x}+
   \int_{0}^{t}\int_{0}^{x}\psi^{\epsilon}(s,y)ds\right)\rightarrow \mathcal{G}^{0}\left(\eta,\int_{0}^{t}\int_{0}^{x}\psi(s,y)ds\right),$$
  in distribution as $\epsilon\rightarrow 0$.
\end{description}
For $\phi\in \mathcal{X}$ and $\eta\in\mathcal{X}_{0}$, define $\mathbb{S}_{\phi}:=\left\{\psi\in L^{2}([0,T]\times[0,1];\mathbb{R}):\phi=\mathcal{G}^{0}(\eta,\int_{0}^{t} \int_{0}^{x}\psi(s,y)dyds)\right\}$. Let $I_{\eta}:\mathcal{X}\rightarrow [0,\infty]$ be defined by
\begin{equation}\label{rate function}
I_{\eta}(\phi)=\left\{\begin{array}{ll}
\inf_{\phi\in \mathbb{S}_{\phi}} \left\{\frac{1}{2}\int_{0}^{T}\int_{0}^{1}|\psi(s,y)|^{2}dyds \right\} & \text { if } \phi \in \mathcal{X}, \\
+\infty & \text { if } \mathbb{S}_{\phi}=\emptyset.
\end{array}\right.
\end{equation}

The following criteria was established in \cite[Theorem 7]{MR2435853}.
\begin{lemma}\label{Argument tool}
Let $\mathcal{G}^{0}:\mathcal{X}_{0}\times C([0,T]\times[0,1])\rightarrow \mathcal{X}$ be a measurable maps, and assume that the above conditions hold. Then the family $\{Z^{\epsilon,\eta}\}$ satisfies the uniform Laplace principle on $\mathcal{X}$ with rate function \eqref{rate function}, uniformly $\eta$ on compact subsets of $\mathcal{X}_{0}$.
\end{lemma}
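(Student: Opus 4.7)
The plan follows the Budhiraja--Dupuis--Maroulas programme. Since $\mathcal{X}$ is Polish, the Laplace principle is equivalent to the large deviation principle, so it suffices to establish both Laplace bounds along arbitrary sequences $\eta^{\epsilon}\to\eta$ inside any compact $K\subset\mathcal{X}_{0}$. The engine is the Bou\'e--Dupuis variational representation for the Brownian sheet,
\begin{equation}\nonumber
-\epsilon\log\mathbb{E}\exp\!\left(-\frac{h(Z^{\epsilon,\eta^{\epsilon}})}{\epsilon}\right)=\inf_{\psi\in\mathcal{A}_{2}}\mathbb{E}\!\left[\tfrac{1}{2}\|\psi\|_{L^{2}}^{2}+h\!\left(\mathcal{G}^{\epsilon}\!\left(\eta^{\epsilon},\sqrt{\epsilon}\tfrac{\partial W}{\partial x}+\int_{0}^{\cdot}\!\!\int_{0}^{\cdot}\!\psi\right)\right)\right],
\end{equation}
where $\|\psi\|_{L^{2}}^{2}=\int_{0}^{T}\!\int_{0}^{1}|\psi(s,y)|^{2}dyds$, combined with the compactness of $S^{N}$ in the weak topology of $L^{2}([0,T]\times[0,1])$.

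For the Laplace upper bound I would pick a $\delta$-near minimizer $\phi^{\star}$ of $h+I_{\eta}$, choose $\psi^{\star}\in\mathbb{S}_{\phi^{\star}}$ with $\tfrac{1}{2}\|\psi^{\star}\|_{L^{2}}^{2}\leq I_{\eta}(\phi^{\star})+\delta$, and plug the deterministic control $\psi^{\star}$ into the representation. Condition (b), applied with $\psi^{\epsilon}\equiv\psi^{\star}$ and the given $\eta^{\epsilon}\to\eta$, yields $\mathcal{G}^{\epsilon}(\eta^{\epsilon},\sqrt{\epsilon}\tfrac{\partial W}{\partial x}+\int_{0}^{\cdot}\int_{0}^{\cdot}\psi^{\star})\to\phi^{\star}$ in distribution, and bounded convergence of the continuous bounded $h$ delivers $\limsup_{\epsilon\to 0}(-\epsilon\log\mathbb{E}e^{-h(Z^{\epsilon,\eta^{\epsilon}})/\epsilon})\leq h(\phi^{\star})+I_{\eta}(\phi^{\star})+\delta$. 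For the lower bound I would pick $\psi^{\epsilon}\in\mathcal{A}_{2}$ achieving the infimum up to an $\epsilon$-error; the cost term bounds $\sup_{\epsilon}\mathbb{E}\|\psi^{\epsilon}\|_{L^{2}}^{2}$ uniformly, and after a soft truncation one may arrange $\psi^{\epsilon}\in\mathcal{A}_{2}^{N}$ for some $N$. On a Skorohod extension, extract a subsequence with $(\eta^{\epsilon},\psi^{\epsilon})\to(\eta,\psi)$ in distribution and $\psi\in S^{N}$; condition (b) then gives $Z^{\epsilon,\eta^{\epsilon}}\to\mathcal{G}^{0}(\eta,\int_{0}^{\cdot}\int_{0}^{\cdot}\psi)$, and Fatou combined with the definition of $I_{\eta}$ yields the required $\liminf\geq\inf\{h+I_{\eta}\}$. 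That $I_{\eta}$ is itself a good rate function follows from the same weak-compactness/Skorohod scheme, this time invoking condition (a) with the constant sequence $\eta^{n}\equiv\eta$ and the weak lower semicontinuity of $\|\cdot\|_{L^{2}}^{2}$ on level sets.

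The hardest part I expect is threading the uniformity in $\eta\in K$ through both directions. In the upper bound the $\delta$-near minimizers must be selected so that $h(\phi^{\star})+I_{\eta}(\phi^{\star})+\delta$ is controlled uniformly in $\eta\in K$; in the lower bound the weak-limit extraction has to be carried out jointly with an arbitrary $\eta^{\epsilon}\to\eta$ inside $K$. This is precisely why hypotheses (a) and (b) are phrased with \emph{varying} initial conditions rather than for a fixed $\eta$: in practice one recasts each bound as a minimax statement, exploits compactness of $K$ together with the joint continuity on $K\times S^{N}$ of $(\eta,\psi)\mapsto\mathcal{G}^{0}(\eta,\int_{0}^{\cdot}\int_{0}^{\cdot}\psi)$ supplied by (a), and concludes via a diagonal subsequence argument. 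Making this joint compactness and selection step work cleanly, while keeping the estimates coming from the Bou\'e--Dupuis representation aligned with the varying initial data, is the principal technical burden of the argument.
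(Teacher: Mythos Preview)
The paper does not prove this lemma at all: the sentence immediately preceding it reads ``The following criteria was established in \cite[Theorem 7]{MR2435853},'' and no argument is given. So there is nothing in the paper to compare your proposal against; the authors simply invoke the Budhiraja--Dupuis--Maroulas result as a black box.

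Your sketch is the standard BDM argument and is essentially what one finds in that reference: the Bou\'e--Dupuis variational formula for the Brownian sheet, the upper bound via a deterministic near-optimal control and condition~(b), the lower bound via near-optimal stochastic controls, a cost-based reduction to $\mathcal{A}_2^N$, weak compactness of $S^N$, and Skorohod extraction. One small comment: in the lower bound you write ``condition~(b) then gives $Z^{\epsilon,\eta^{\epsilon}}\to\mathcal{G}^{0}(\eta,\ldots)$,'' but what condition~(b) actually delivers is convergence of the \emph{controlled} process $\mathcal{G}^{\epsilon}(\eta^{\epsilon},\sqrt{\epsilon}\tfrac{\partial W}{\partial x}+\int\psi^{\epsilon})$, not of $Z^{\epsilon,\eta^{\epsilon}}=\mathcal{G}^{\epsilon}(\eta^{\epsilon},\sqrt{\epsilon}\tfrac{\partial W}{\partial x})$ itself; the former is exactly what appears inside the variational representation, so the argument still goes through, but the sentence as written conflates the two objects. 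Otherwise your outline is sound and matches the cited source.
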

If we can verify condition $(a)$ and $(b)$, then a family of solution $\{Z^{\epsilon,\eta}\}$ satisfies the uniform Laplace principle,  then we obtain the large deviation principle for $\{Z^{\epsilon,\eta}\}$ by \cite{MR2435853}. Hence our next aim to verify condition $(a)$ and $(b)$. Before that, we introduce some results for the controlled and skeleton equations.
\section{Controlled and skeleton equations}
In this section, we mainly introduce some results on the controlled and skeleton equations. Let $\mathcal{X}=C([0,T],L^{\rho}([0,1]))$ and $\mathcal{X}_{0}=L^{\rho}([0,1])$. And the solution map of \eqref{1.1} $u^{\epsilon}=\mathcal{G}^{\epsilon}(\eta,\sqrt{\epsilon}\frac{\partial W}{\partial x})$. Denote $v^{\epsilon,\psi}_{\eta}(t,x):=\mathcal{G}^{\epsilon}\left(\eta,\int_{0}^{t}\int_{0}^{x}\psi(s,y)ds+\sqrt{\epsilon}\frac{\partial W}{\partial x}\right)$, and it represents the solution of the controlled equation and it has the mild solution as follow
\begin{equation}\label{controlled equation}
\begin{aligned}
v^{\epsilon,\psi}_{\eta}(t,x)=&\int_{0}^{1}G_{t}(x,y)\eta(y)dy-\int_{0}^{t}\int_{0}^{1}\partial_{y}G_{t-s}(x,y)g(s,y,v^{\epsilon,\psi}_{\eta}(s,y))dyds\\
&+\int_{0}^{t}\int_{0}^{1}G_{t-s}(x,y)f(s,y,v^{\epsilon,\psi}_{\eta}(s,y))dyds\\
&+\int_{0}^{t}\int_{0}^{1}G_{t-s}(x,y)\sigma(s,y,v^{\epsilon,\psi}_{\eta}(s,y))\int_{0}^{y}\psi(s,y^\prime)dy^\prime dyds\\
&+\sqrt{\epsilon}\int_{0}^{t}\int_{0}^{1}G_{t-s}(x,y)\sigma(s,y,v^{\epsilon,\psi}_{\eta}(s,y))W(dy,ds).
\end{aligned}
\end{equation}
In particular, for $\epsilon=0$, we obtain the skeleton equation or limiting equation,  denote $v_{\eta}^{0,\psi}:=\mathcal{G}^{0}\left(\eta,\int_{0}^{t}\int_{0}^{x}\psi(s,y)dyds\right)$, with the following mild formulation
\begin{equation}\label{skeleton equation}
\begin{aligned}
v^{0,\psi}_{\eta}(t,x)=&\int_{0}^{1}G_{t}(x,y)\eta(y)dy-\int_{0}^{t}\int_{0}^{1}\partial_{y}G_{t-s}(x,y)g(s,y,v^{0,\psi}_{\eta}(s,y))dyds\\
&+\int_{0}^{t}\int_{0}^{1}G_{t-s}(x,y)f(s,y,v^{0,\psi}_{\eta}(s,y))dyds\\
&+\int_{0}^{t}\int_{0}^{1}G_{t-s}(x,y)\sigma(s,y,v^{0,\psi}_{\eta}(s,y))\int_{0}^{y}\psi(s,y^\prime)dy^\prime dyds.
\end{aligned}
\end{equation}
In addition, we need to specify the rate function. Let $\phi(t,x)\in C([0,T];L^{\rho}([0,1]))$. We define the following rate function
\begin{equation}\label{rate}
I_{\eta}(\phi):=\frac{1}{2}\inf_{\psi}\int_{0}^{T}\int_{0}^{1}|\psi(s,y)|^{2}dyds,
\end{equation}
where $\psi\in L^{2}([0,T]\times[0,1];\mathbb{R})$ such that $\phi$ satisfies the following relation
\begin{equation}\label{ske}
\begin{aligned}
\phi(t,x)=&\int_{0}^{1}G_{t}(x,y)\eta(y)dy-\int_{0}^{t}\int_{0}^{1}\partial_{y}G_{t-s}(x,y)g(s,y,\phi(s,y))dyds\\
&+\int_{0}^{t}\int_{0}^{1}G_{t-s}(x,y)f(s,y,\phi(s,y)))dyds\\
&+\int_{0}^{t}\int_{0}^{1}G_{t-s}(x,y)\sigma(s,y,\phi(s,y))\int_{0}^{y}\psi(s,y^\prime)dy^\prime dyds.
\end{aligned}
\end{equation}
The following theorems states the existence and uniqueness of the controlled and skeleton equations.
\begin{theorem}
Let $\eta,f,g,\sigma$ satisfy condition \textbf{(H1)-(H6)} and $\psi\in\mathcal{A}_{2}^{N}$ , then the skeleton equation \eqref{controlled equation} has a unique solution on $C([0,T];L^{\rho}([0,1]))$ for $\rho>6$ and $\epsilon>0$.
\end{theorem}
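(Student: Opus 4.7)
The plan is to follow closely the two-step scheme used for the original equation \eqref{1.1}: first establish local well-posedness for a truncated version of \eqref{controlled equation} via the Banach fixed-point argument of Theorem \ref{Theorem 3.1}, then extract the global solution by the approximation and energy-estimate scheme of Lemma \ref{Lemma 3.1} and Theorem \ref{existence and unique}. Since the only structural difference between \eqref{controlled equation} and \eqref{1.1} is the deterministic control drift, the bulk of the proof reduces to estimating that term; the other four contributions can be reused verbatim.

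I would insert a truncation $\chi_R(\|v^{\epsilon,\psi}_\eta(s)\|_\rho)$ in front of $f$, $g$ and $\sigma$ in \eqref{controlled equation} and study the induced map $\mathcal{M}_T^\psi$ on the space $B_{\delta,T}$. The initial data, the drift $f$, the conservative drift $g$, and the It\^o stochastic integral are estimated exactly as $A_1^\epsilon,\dots,A_4^\epsilon$ in the proof of Theorem \ref{Theorem 3.1}. For the genuinely new control term $D(t,x):=\int_0^t\int_0^1 G_{t-s}(x,y)\chi_R(\|v^{\epsilon,\psi}_\eta(s)\|_\rho)\sigma(s,y,v^{\epsilon,\psi}_\eta(s,y))\int_0^y\psi(s,y')\,dy'\,dy\,ds$, Cauchy-Schwarz on the inner integral gives $|\int_0^y\psi(s,y')\,dy'|\leq \|\psi(s,\cdot)\|_{L^2}$; then Minkowski and Young with $\|G_{t-s}\|_1\leq C$, together with the linear growth of $\sigma$ from \textbf{(H2)} and the bound $\int_0^T\|\psi(s,\cdot)\|_2^2\,ds\leq N$, yield $\mathbb{E}\|D(t,\cdot)\|_\rho^\rho \leq C_{R,N}\,t^{\rho/2}$, which secures invariance. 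For contraction, the Lipschitz property of $\sigma$ combined with the $\chi_R$-Lipschitz trick of \eqref{3.8} and a further Cauchy-Schwarz in the time variable against the exponential weight $e^{-\delta(t-s)}$ produces a bound $C_{R,N}(1/\delta)^{\rho/2}\|u^{\epsilon,\psi}-v^{\epsilon,\psi}\|_{B_{T,\delta,\rho}}^\rho$, absorbed by enlarging $\delta$; Banach's theorem delivers a unique truncated solution.

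For global existence I would mimic the approximation \eqref{3.16} with bounded $f_n,g_n,\sigma_n$ together with the degenerate-equation argument of Lemma \ref{convergence}. Applying \cite[Lemma 4.3]{MR1787126} to $\|v_n^{k,\epsilon,\psi}(t)\|_\rho^\rho$ generates one additional term $\rho\int_0^t\int_0^1 |v_n^{k,\epsilon,\psi}(s,x)|^{\rho-2}v_n^{k,\epsilon,\psi}(s,x)\sigma_n(s,x,v_n^{k,\epsilon,\psi}(s,x))\int_0^x\psi(s,y')\,dy'\,dx\,ds$, which by linear growth of $\sigma_n$, Cauchy-Schwarz in $y'$ and H\"older in $x$ is bounded by $C(1+\|v_n^{k,\epsilon,\psi}(s)\|_\rho^\rho)\|\psi(s,\cdot)\|_2$; Gronwall together with $\psi\in\mathcal{A}_2^N$ then produces the analogue of Lemma \ref{Lemma 3.1} with constants depending on $N$ but not on $n$. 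The tightness arguments of Lemma \ref{Tight tool} and Lemma \ref{tightness of stochatic term} carry over, since the new control drift is a deterministic Bochner integral whose time and space continuity follow from the same Green-function bounds used for $J_{2,n}^\epsilon$ and $J_{3,n}^\epsilon$; Skorokhod representation and the weak-to-mild limit passage of Theorem \ref{existence and unique} then complete existence. Uniqueness is identical to that of Theorem \ref{existence and unique}, using the truncated well-posedness together with the uniform $L^\rho$ bound.

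The main obstacle is that, unlike the stochastic convolution $A_4^\epsilon$, the control term $D$ is a first-order Bochner integral, so it cannot exploit the favorable $1/2$-exponent supplied by Burkholder-Davis-Gundy. The time decay must be extracted entirely from Young's inequality and the $L^2([0,T])$ bound on $\|\psi(s,\cdot)\|_2$, and balancing this against the Green-function singularity, the linear growth of $\sigma$, and the $\chi_R$-Lipschitz step is the delicate point; fortunately the restriction $\rho>6$ already imposed in \textbf{(H1)} is more than enough to accommodate this balance.
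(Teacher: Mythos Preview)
Your approach is correct but takes a genuinely different route from the paper. The paper does not redo any of the fixed-point, approximation, or tightness machinery. Instead it observes that for fixed $\psi\in\mathcal{A}_2^N$ the exponential
\[
\frac{d\hat{\mathbb{P}}^{\psi,\epsilon}}{d\mathbb{P}}=\exp\Bigl\{-\tfrac{1}{\epsilon}\!\int_0^T\!\!\int_0^1\!\psi\,\tfrac{\partial}{\partial y}W(ds,dy)-\tfrac{1}{2\epsilon}\!\int_0^T\!\!\int_0^1\!\psi^2\,dy\,ds\Bigr\}
\]
is a true martingale (since $\psi$ is a.s.\ bounded in $L^2$), so $\hat{\mathbb{P}}^{\psi,\epsilon}$ is an equivalent probability measure, and under it the shifted process $\sqrt{\epsilon}\,\partial_xW+\int_0^t\!\int_0^x\psi$ is again a Brownian sheet by Girsanov. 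The controlled equation \eqref{controlled equation} under $\mathbb{P}$ is then exactly the original equation \eqref{1.1} under $\hat{\mathbb{P}}^{\psi,\epsilon}$, so Theorem \ref{existence and unique} applies verbatim on $(\Omega,\mathcal{F},\mathcal{F}_t,\hat{\mathbb{P}}^{\psi,\epsilon})$; equivalence of the measures transfers the $\mathbb{P}$-a.s.\ well-posedness back.

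What each approach buys: the Girsanov argument is essentially one line and avoids re-deriving every estimate with the extra drift, at the cost of invoking a change-of-measure theorem for the Brownian sheet. Your direct approach is more elementary and self-contained, and has the side benefit of yielding explicit $L^\rho$ bounds on $v^{\epsilon,\psi}_\eta$ with constants depending only on $N$ (not on $\epsilon$), which is useful later; but it is considerably longer. One minor comment on your last paragraph: for the control term $D$ you are using $\|G_{t-s}\|_1\le C$, which carries no singularity at all, so the ``balancing'' you worry about is not actually delicate here---the integrability in time comes entirely from Cauchy--Schwarz against $\|\psi(s,\cdot)\|_2$, and $\rho>6$ plays no role in that particular estimate.
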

\begin{proof}
Similar to \cite{MR2435853} and \cite{MR3575422}, we mainly use  Girsanov's theorem  to prove the well-posed of  controlled  equation. For any fixed $\psi\in\mathcal{A}_{2}^{N}$,  define

$$\frac{d\hat{\mathbb{P}}^{\psi,\epsilon}}{d\mathbb{P}}:=e^{\biggr\{-\frac{1}{\epsilon}\int_{0}^{T}\int_{0}^{1}\psi(s,y)\frac{\partial }{\partial y} W(ds,dy)-\frac{1}{2\epsilon}\int_{0}^{T}\int_{0}^{1}\psi^{2}(s,y)dyds\biggr\}},$$
where $\frac{\partial W(s,y)}{\partial y}$ is a Brownian sheet and  the integral $\int_{0}^{T}\int_{0}^{1}\psi(s,y)\frac{\partial }{\partial y} W(ds,dy)$ should be understood  in the sense of Walsh\cite{MR876085},  the stochastic process
$$\hat{W}(t,x):=\sqrt{\epsilon}\frac{\partial W(t,x)}{\partial x}+\int_{0}^{t}\int_{0}^{x}\psi(s,x)dxds,\quad (t,x)\in[0,T]\times[0,1]$$
is a Brownian sheet.  Indeed,
since
$$e^{\biggr\{-\frac{1}{\epsilon}\int_{0}^{T}\int_{0}^{1}\psi(s,y)\frac{\partial }{\partial y} W(ds,dy)-\frac{1}{2\epsilon}\int_{0}^{T}\int_{0}^{1}\psi^{2}(s,y)dyds\biggr\}}$$
is a exponential martingale and  $\hat{\mathbb{P}}^{\psi,\epsilon}$ is a probability measure on  $(\Omega,\mathcal{F},\mathcal{F}_{t},\mathbb{P})$ and  $\hat{\mathbb{P}}^{\psi,\epsilon}$ is equivalent to $\mathbb{P}$. Thus Girsanov's theorem  shows that $\hat{W}(t,x)$ is a real-valued Wiener process with respect $\hat{\mathbb{P}}^{\psi,\epsilon}$,  by the equivalence of the measure, we  know that
$\hat{W}(t,x)$ is a Brownian sheet.
On the probability space $(\Omega,\mathcal{F},\mathcal{F}_{t},\hat{\mathbb{P}}^{\psi,\epsilon})$,  Theorem \ref{existence and unique} can be applied here,  we obtain the existence and uniqueness of the controlled equation for probability measure   $\hat{\mathbb{P}}^{\psi,\epsilon}$,  since
$\hat{\mathbb{P}}^{\psi,\epsilon}$ is equivalent to $\mathbb{P}$, then we get the well-posedness  of the skeleton equation  with respect to $\mathbb{P}$.
\end{proof}
\begin{theorem}
Let $\eta,f,g,\sigma$ satisfy condition \textbf{(H1)-(H6)} and $\psi\in L^{2}([0,T]\times[0,1];\mathbb{R})$, equation \eqref{skeleton equation} has a unique solution  on $C([0,T];L^{\rho}([0,1]))$ for $\rho>6$.
\end{theorem}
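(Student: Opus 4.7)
The plan is to mirror the strategy of Theorem \ref{Theorem 3.1} and Theorem \ref{existence and unique} in a purely deterministic setting, since the skeleton equation \eqref{skeleton equation} contains no It\^o integral. First I would introduce a truncated skeleton equation by inserting a cutoff $\chi_{R}(\|v(t)\|_{\rho})$ in front of $f$, $g$, and $\sigma$, so that the nonlinearities become bounded. The new control-driven term
$$\int_{0}^{t}\int_{0}^{1}G_{t-s}(x,y)\sigma(s,y,v(s,y))\int_{0}^{y}\psi(s,y')\,dy'\,dy\,ds$$
can be tamed uniformly in $y\in[0,1]$ by Cauchy--Schwarz:
$$\left|\int_{0}^{y}\psi(s,y')\,dy'\right|\leq \|\psi(s,\cdot)\|_{L^{2}([0,1])},$$
and since $\psi\in L^{2}([0,T]\times[0,1])$, the map $s\mapsto \|\psi(s,\cdot)\|_{2}$ lies in $L^{2}([0,T])$, which provides enough integrability to combine with the Green-function kernel estimates of Lemma \ref{lemma 2.1}.

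Second, I would apply the Banach fixed-point theorem on the weighted space $B_{\delta,T}$ introduced in Section 3 (restricted to deterministic functions), choosing $\delta$ large enough. The estimates for the $f$- and $g$-terms are identical to \eqref{3.8}--\eqref{3.11}. For the new $\sigma$-control term, the Lipschitz assumption \textbf{(H2)} and Minkowski yield a contraction contribution bounded by
$$C_{R}\left\{\int_{0}^{t}(t-s)^{-\frac{1}{2}+\frac{1}{2\rho}}\,\|\psi(s,\cdot)\|_{2}\,\|u(s)-v(s)\|_{\rho}\,ds\right\}^{\rho},$$
and an application of H\"older's inequality in $s$ using $\|\psi(\cdot)\|_{2}\in L^{2}([0,T])$ shows that the resulting multiplier in the $B_{\delta,T}$-norm tends to zero as $\delta\to\infty$. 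Hence for large $\delta$, $\mathcal{M}_{T}$ is a contraction and the truncated skeleton equation has a unique fixed point.

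Third, to remove the truncation I would prove a deterministic analogue of Lemma \ref{Lemma 3.1}: an a priori bound $\sup_{t\in[0,T]}\|v^{R}(t)\|_{\rho}^{\rho}\leq C$ independent of $R$. Since there is no noise, there is no need for the degenerate approximation of Lemma \ref{convergence}; the $L^{\rho}$ identity can be obtained directly by multiplying the mild equation formally by $|v|^{\rho-2}v$, or equivalently by working with the classical equation and integrating by parts. The quadratic-growth piece $g_{2}$ cancels as in $\tilde{A}_{3,2}(t)$, the $f$- and $g_{1}$-terms are controlled by Young's inequality giving $C(1+\int_{0}^{t}\|v(s)\|_{\rho}^{\rho}\,ds)$, and the new control term produces
$$\rho\int_{0}^{t}\int_{0}^{1}|v(s,x)|^{\rho-2}v(s,x)\,\sigma(s,x,v(s,x))\!\int_{0}^{x}\!\!\psi(s,y')dy'\,dx\,ds \leq C\!\int_{0}^{t}\!\!\|\psi(s,\cdot)\|_{2}\bigl(1+\|v(s)\|_{\rho}^{\rho}\bigr)ds,$$
after using the linear growth of $\sigma$ and the uniform bound on $\int_{0}^{\cdot}\psi(s,y')dy'$. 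Since $s\mapsto \|\psi(s,\cdot)\|_{2}$ is integrable on $[0,T]$ by Cauchy--Schwarz and the finiteness of $\int_{0}^{T}\!\int_{0}^{1}|\psi|^{2}dyds$, Gr\"onwall's inequality closes the estimate and yields the uniform bound. Uniqueness is obtained exactly as in Theorem \ref{existence and unique}, by comparing two solutions on stopping intervals $[0,\tau_{R}^{1}\wedge\tau_{R}^{2}]$ and letting $R\to\infty$.

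The main obstacle will be bookkeeping the new mixed term $\sigma(s,y,v)\cdot\int_{0}^{y}\psi$: both the contraction and the a priori estimates need the $\|\psi(s,\cdot)\|_{2}$ factor to combine with the singular kernels $(t-s)^{-1/2+1/(2\rho)}$, $(t-s)^{-1+1/(2\rho)}$ coming from Lemma \ref{lemma 2.1} into an integrable object on $[0,T]$. This is precisely where the assumption $\rho>6$ is used once more, giving the necessary slack in H\"older exponents so the Beta-function integrals remain finite and can be absorbed by choosing $\delta$ large.
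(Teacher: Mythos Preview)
Your proposal is correct in spirit and would produce a valid self-contained proof, but it takes a genuinely different route from the paper. The paper's own proof is a one-line citation: since $\psi\in L^{2}([0,T]\times[0,1])$, the well-posedness of the skeleton equation is obtained directly from Theorem~4.2 of Setayeshgar \cite{MR4246020}. No truncation, fixed-point argument, or a~priori estimate is carried out explicitly in the paper for this statement.

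Your approach, by contrast, reproduces the full machinery of Section~3 in the deterministic setting: truncation by $\chi_{R}$, contraction on the weighted space $B_{\delta,T}$, an $L^{\rho}$ energy identity to get a uniform bound, and removal of the cutoff via stopping-level arguments. This is more informative and self-contained than the paper's citation, and it has the advantage of making the role of the control term $\int_{0}^{y}\psi(s,y')dy'$ transparent through the Cauchy--Schwarz bound $|\int_{0}^{y}\psi(s,y')dy'|\le\|\psi(s,\cdot)\|_{2}$. Two small remarks on your sketch: (i) in the contraction estimate for the $\sigma$-control term you can actually use $\|G_{t-s}(\cdot)\|_{1}\le C$ rather than the sharper $(t-s)^{-1/2+1/(2\rho)}$ coming from $\|G_{t-s}(\cdot)\|_{\rho}$, which already pairs with $\|\psi(s,\cdot)\|_{2}\in L^{2}([0,T])$; (ii) the restriction $\rho>6$ in the paper was imposed to obtain tightness of the stochastic convolution $J_{4,n}^{\epsilon}$ in $C([0,T]\times[0,1])$---for the purely deterministic skeleton equation no such tightness step is needed, so your closing comment that ``$\rho>6$ is used once more'' for the H\"older bookkeeping is not really where the constraint bites here; $\rho\ge2$ would already suffice for your Gr\"onwall closure.
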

\begin{proof}
Due to $\psi(t,x)\in L^{2}([0,T]\times[0,1])$, then we get the well-posedness of the skeleton equation by the Theorem 4.2\cite{MR4246020}.
\end{proof}
\section{The proof of the Large deviation principle}
In this Section, we state the main result of this paper.  Based on the results of the previous section, we give our the main result as follow.
\begin{theorem}
Under condition \textbf{(H1)-(H6)} and $\rho>6$, the solution processes $\{u^{\epsilon}(t,x):t\in[0,T],x\in[0,1]\}$ satisfy the uniform Laplace principle on $C([0,T];L^{\rho}([0,1]))$  with rate function $I_{\eta}$ given by \eqref{rate}.
\end{theorem}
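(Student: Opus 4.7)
The plan is to apply Lemma \ref{Argument tool} with $\mathcal{X} = C([0,T];L^{\rho}([0,1]))$ and $\mathcal{X}_{0} = L^{\rho}([0,1])$, where $\mathcal{G}^{\epsilon}$ and $\mathcal{G}^{0}$ are the solution maps of the controlled equation \eqref{controlled equation} and skeleton equation \eqref{skeleton equation} respectively. Accordingly, the whole proof reduces to verifying the two conditions (a) and (b). Before either, I would first establish a uniform $L^{\rho}$-bound
\[
\sup_{\epsilon\in[0,1]}\sup_{\psi\in\mathcal{A}_{2}^{N}}\mathbb{E}\sup_{t\in[0,T]}\|v^{\epsilon,\psi}_{\eta}(t)\|_{\rho}^{\rho} \leq C(N,T,\rho,\|\eta\|_{\rho}),
\]
by repeating the degenerate-approximation argument of Lemma \ref{convergence}--\ref{Lemma 3.1} applied to the controlled equation. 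The new drift term $\int_{0}^{t}\int_{0}^{1}G_{t-s}(x,y)\sigma(s,y,v^{\epsilon,\psi}_{\eta})\int_{0}^{y}\psi(s,y')dy'\,dyds$ is absorbed using $\sigma$'s linear growth, the Cauchy--Schwarz bound $(\int_{0}^{y}\psi\,dy')^{2}\leq \|\psi(s,\cdot)\|_{L^{2}}^{2}$, and the constraint $\int_{0}^{T}\|\psi(s,\cdot)\|_{L^{2}}^{2}ds\leq N$, closing the estimate by Gronwall. The same strategy yields analogous $t$- and $x$-continuity moment bounds so that the families $\{v^{\epsilon,\psi^{\epsilon}}_{\eta^{\epsilon}}\}$ and $\{v^{0,\psi^{n}}_{\eta^{n}}\}$ are tight in $C([0,T];L^{\rho}([0,1]))$ (using Lemma \ref{Tight tool} for the $f,g$ convolutions and Lemma \ref{tightness of stochatic term} for the stochastic one).

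For condition (a), by Skorohod's representation and the compactness of $S^{M}$, I can assume $\psi^{n}\rightharpoonup \psi$ weakly in $L^{2}([0,T]\times[0,1])$ and $\eta^{n}\to\eta$ in $L^{\rho}$ almost surely. Subtracting the two mild formulations and using assumptions \textbf{(H2)}--\textbf{(H5)} together with Lemma \ref{lemma 2.1}, the contributions from the initial datum, $f$ and $g$ lead to an inequality of the form
\[
\mathbb{E}\|v^{0,\psi^{n}}_{\eta^{n}}(t)-v^{0,\psi}_{\eta}(t)\|_{\rho}^{\rho} \leq \varepsilon_{n}(t) + C\int_{0}^{t}\bigl[(t-s)^{-\frac{1}{2}+\frac{1}{2\rho}}+1\bigr]\mathbb{E}\|v^{0,\psi^{n}}_{\eta^{n}}(s)-v^{0,\psi}_{\eta}(s)\|_{\rho}^{\rho}\,ds,
\]
where $\varepsilon_{n}(t)$ contains $\|\eta^{n}-\eta\|_{\rho}^{\rho}$ and the genuinely new piece
\[
R_{n}(t,x) := \int_{0}^{t}\!\int_{0}^{1}G_{t-s}(x,y)\sigma(s,y,v^{0,\psi}_{\eta}(s,y))\!\int_{0}^{y}(\psi^{n}-\psi)(s,y')dy'\,dyds.
\]
Here I would invoke the weak $L^{2}$ convergence of $\psi^{n}$ against the $L^{2}$ kernel $(s,y')\mapsto \int_{0}^{1}G_{t-s}(x,y)\sigma(s,y,v^{0,\psi}_{\eta}(s,y))\mathbf{1}_{[0,y]}(y')\,dy$, together with a dominated convergence/uniform integrability argument in $(t,x)$, to conclude $\|R_{n}\|_{C([0,T];L^{\rho})}\to 0$. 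A singular Gronwall lemma then gives $v^{0,\psi^{n}}_{\eta^{n}}\to v^{0,\psi}_{\eta}$ in $C([0,T];L^{\rho}([0,1]))$.

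For condition (b), I would combine the argument of (a) with the vanishing of the stochastic convolution. Writing the difference $v^{\epsilon,\psi^{\epsilon}}_{\eta^{\epsilon}}-v^{0,\psi}_{\eta}$, the only extra term is $\sqrt{\epsilon}\int_{0}^{t}\int_{0}^{1}G_{t-s}(x,y)\sigma(s,y,v^{\epsilon,\psi^{\epsilon}}_{\eta^{\epsilon}}(s,y))W(dy,ds)$, which is handled exactly as in Lemma \ref{tightness of stochatic term}: Burkholder--Davis--Gundy together with the uniform $L^{\rho}$-bound yields an estimate of order $\sqrt{\epsilon}$, hence this term converges to zero in $C([0,T];L^{\rho}([0,1]))$ in probability. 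Using Girsanov's theorem (as in the proof of the well-posedness of \eqref{controlled equation}) to justify working on a single probability space, and Skorohod representation applied to $(\psi^{\epsilon},\eta^{\epsilon})\in\mathcal{A}_{2}^{N}\times L^{\rho}$, the remaining deterministic pieces are controlled by the same kernel estimates as in (a). An $L^{\rho}$ Gronwall argument, again relying on Lemma \ref{lemma 2.1}, yields $v^{\epsilon,\psi^{\epsilon}}_{\eta^{\epsilon}}\to v^{0,\psi}_{\eta}$ in distribution in $\mathcal{X}$.

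The hard part is the passage to the limit in the nonlinear control term $R_{n}$ (and its analogue in (b)): because $\sigma$ is only globally Lipschitz with linear growth and the solutions live in $L^{\rho}$ rather than in $C([0,T]\times[0,1])$ as in \cite{MR1486551,MR1720097}, a uniform (in $x$) treatment of the kernel is unavailable. Instead, one must exploit the smoothing of $G$ together with the uniform-in-$\psi$ $L^{\rho}$-estimates and the quadratic-growth control of $g$ via its decomposition \eqref{2.2} to justify the convergence of $R_{n}$ in $L^{\rho}$. Once both conditions are verified, Lemma \ref{Argument tool} produces the uniform Laplace principle on $C([0,T];L^{\rho}([0,1]))$ with the rate function \eqref{rate}, which is equivalent to the stated large deviation principle.
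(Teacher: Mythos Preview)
Your overall plan—reduce to Lemma \ref{Argument tool} and check conditions (a) and (b)—matches the paper. The execution, however, is genuinely different, and for condition (b) your sketch has a gap.

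The paper does \emph{not} run a Gronwall argument on the difference $v^{\epsilon,\psi^{\epsilon}}_{\eta^{\epsilon}}-v^{0,\psi}_{\eta}$. Instead it proves tightness of the controlled solutions $v^{\epsilon,\psi^{\epsilon}}_{\eta^{\epsilon}}$ in $C([0,T];L^{\rho})$ (term by term, via Lemma \ref{Tight tool} for the $f,g$-convolutions, via Lemma \ref{tightness of stochatic term} for the stochastic convolution, and by a direct Cauchy--Schwarz estimate for the control term), then applies Prokhorov and Skorohod to the \emph{full} triple $(v^{\epsilon,\psi^{\epsilon}}_{\eta^{\epsilon}},\partial W/\partial x,\psi^{\epsilon})$. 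On the new probability space the controlled solutions converge almost surely; one then passes to the limit in each of the five pieces of the mild formulation and identifies the limit as the unique solution of the skeleton equation. Condition (a) is handled afterwards, essentially as a degenerate case of the same identification argument.

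The gap in your route for (b) is the Gronwall step: the contribution of $g$ through $\partial_y G_{t-s}$ produces, via \textbf{(H3)}, the product
\[
\bigl(1+\|v^{\epsilon,\psi^{\epsilon}}_{\eta^{\epsilon}}(s)\|_{\rho}+\|v^{0,\psi}_{\eta}(s)\|_{\rho}\bigr)\,\|v^{\epsilon,\psi^{\epsilon}}_{\eta^{\epsilon}}(s)-v^{0,\psi}_{\eta}(s)\|_{\rho},
\]
so the kernel in your singular Gronwall carries the \emph{random} factor $\sup_{s}\|v^{\epsilon,\psi^{\epsilon}}_{\eta^{\epsilon}}(s)\|_{\rho}$, which is only bounded in probability, not a.s.\ uniformly in $\epsilon$. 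Applying Skorohod only to $(\psi^{\epsilon},\eta^{\epsilon})$ does not control this factor. To close, you would need either a localization via stopping times/high-probability events on which $\sup_{s}\|v^{\epsilon}\|_{\rho}$ is bounded, or—more cleanly—to apply Skorohod to $v^{\epsilon}$ itself, exactly as the paper does. Since you already establish the required tightness of $\{v^{\epsilon,\psi^{\epsilon}}_{\eta^{\epsilon}}\}$, switching to the paper's identification-of-the-limit approach costs you nothing and removes the difficulty entirely. (A minor point: the exponent in your displayed Gronwall inequality is the one for $G$, not $\partial_y G$; the correct power for the $g$ term is $(t-s)^{-1+\frac{\rho-1}{2\rho}}$, still integrable but more singular than you wrote.)
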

According to the Lemma \ref{Argument tool}, we need to the condition $(a)$ and $(b)$. Compared with condition $(a)$, the verification of condition $(b)$ is relatively complicated. We mainly give the verification of condition $(b)$.
\begin{proof}
The first thing we want to obtain is that the convergence of the controlled process, namely, let $M<\infty$ , suppose that $\eta^{\epsilon}\rightarrow\eta$ and $\psi^{\epsilon}\rightarrow \psi$ in distribution as $\epsilon\rightarrow 0$, where $\psi^{\epsilon}, \psi\in \mathcal{A}_{2}^{M}$. Is this $v^{\epsilon,\psi^{\epsilon}}_{\eta^{\epsilon}}\rightarrow v^{0,\psi}_{\eta}$ in distribution?  In order to answer the question, we adopt the compactness arguments.

Let
\begin{equation}
\begin{aligned}
v^{\epsilon,\psi^{\epsilon}}_{\eta^{\epsilon}}(t,x)=&\int_{0}^{1}G_{t}(x,y)\eta^{\epsilon}(y)dy-\int_{0}^{t}\int_{0}^{1}\partial_{y}G_{t-s}(x,y)g(s,y,v^{\epsilon,\psi^{\epsilon}}_{\eta^{\epsilon}}(s,y))dyds\\
&+\int_{0}^{t}\int_{0}^{1}G_{t-s}(x,y)f(s,y,v^{\epsilon,\psi^{\epsilon}}_{\eta^{\epsilon}}(s,y))dyds\\
&+\int_{0}^{t}\int_{0}^{1}G_{t-s}(x,y)\sigma(s,y,v^{\epsilon,\psi^{\epsilon}}_{\eta^{\epsilon}}(s,y))\int_{0}^{y}\psi^{\epsilon}(s,y^\prime)dy^\prime dyds\\
&+\sqrt{\epsilon}\int_{0}^{t}\int_{0}^{1}G_{t-s}(x,y)\sigma(s,y,v^{\epsilon,\psi^{\epsilon}}_{\eta^{\epsilon}}(s,y))W(dy,ds)\\
:=&J_{1}^{\epsilon}+J_{2}^{\epsilon}+J_{3}^{\epsilon}+J_{4}^{\epsilon}+J_{5}^{\epsilon}.
\end{aligned}
\end{equation}
For $J_{1}^{\epsilon}$, it is clear that $\|J_{1}^{\epsilon}\|_{\rho}\leq C\|\eta^{\epsilon}\|_{\rho}$. Then Markov's inequality shows that $\mathbb{P}(\|J_{1}^{\epsilon}\|_{\rho}>R)\leq \frac{\mathbb{E}\{\|J_{1}^{\epsilon}\|_{\rho}\}}{R}\leq \frac{C\|\eta^{\epsilon}\|_{\rho}}{R}$ for any $R>0$, this inequality implies $J_{1}^{\epsilon}$ is tight in $C([0,T];L^{\rho}([0,1]))$. For simplicity, we first deal with $J^{\epsilon}_{3}$. By Minkovski's inequality, Young's inequality and assumption $\mathbf{(H5)}$, one obtains
\begin{equation}
\begin{aligned}
\left\|\int_{0}^{t}\int_{0}^{1}G_{t-s}(\cdot,y)f(s,y,v^{\epsilon,\psi^{\epsilon}}_{\eta^{\epsilon}}(s,y))dyds\right\|_{\rho}&\leq \int_{0}^{t}\|G_{t-s}(\cdot)\|_{\rho}\|f(s,\cdot,v^{\epsilon,\psi^{\epsilon}}_{\eta^{\epsilon}}(s,\cdot))\|_{1}ds\\
&\leq C\int_{0}^{t}(t-s)^{-\frac{1}{2}+\frac{1}{2\rho}}ds\sup_{s\in[0,T]}\|f(s,\cdot,v^{\epsilon,\psi^{\epsilon}}_{\eta^{\epsilon}}(s,\cdot))\|_{1}\\
&\leq CT^{\frac{1}{2}+\frac{1}{2\rho}}(K+K\sup_{s\in[0,T]}\|v^{\epsilon,\psi^{\epsilon}}_{\eta^{\epsilon}}(s,\cdot))\|_{\rho}).
\end{aligned}
\end{equation}
In terms of Lemma \ref{Tight tool}, let $\zeta^{\epsilon}=K+K\sup_{s\in[0,T]}\|v^{\epsilon,\psi}_{\eta^{\epsilon}}(s,\cdot))\|_{\rho}$, then we  need only to illustrate the boundedness of the $\zeta^{\epsilon}$ in probability. Similar to the proof of the Lemma \ref{Lemma 3.1} and Theorem \ref{existence and unique}, consider the approximate process $v^{\epsilon,\psi^{\epsilon}}_{\eta^{\epsilon},n}(t,x)$, we have  $\sup_{t\in[0,T]}\|v^{\epsilon,\psi^{\epsilon}}_{\eta^{\epsilon},n}(t,\cdot)\|_{\rho}$  is bounded in probability and $v^{\epsilon,\psi^{\epsilon}}_{\eta^{\epsilon},n}(t,x)$ converge to $v^{\epsilon,\psi^{\epsilon}}_{\eta^{\epsilon}}(t,x)$ on $C([0,T];L^{\rho}([0,1]))$ in probability. Then we have
\begin{equation}
\begin{aligned}
\lim_{C\rightarrow\infty}\sup_{\epsilon\in[0,1]}\mathbb{P}(\zeta^{\epsilon}\geq C)&\leq \lim_{C\rightarrow\infty}\sup_{\epsilon\in[0,1]}\mathbb{P}\left(K\geq \frac{C}{2}\right)+\lim_{C\rightarrow\infty}\sup_{\epsilon\in[0,1]}\mathbb{P}\left(K\sup_{s\in[0,T]}\|v^{\epsilon,\psi^{\epsilon}}_{\eta^{\epsilon}}(s,\cdot))\|_{\rho}\geq\frac{C}{2}\right)\\
&=\lim_{C\rightarrow\infty}\sup_{\epsilon\in[0,1]}\mathbb{P}\left(K\sup_{s\in[0,T]}\|v^{\epsilon,\psi^{\epsilon}}_{\eta^{\epsilon}}(s,\cdot))\|_{\rho}\geq\frac{C}{2}\right)\\
&\leq \lim_{C\rightarrow\infty}\sup_{\epsilon\in[0,1]}\mathbb{P}\left(K\sup_{s\in[0,T]}(\|v^{\epsilon,\psi^{\epsilon}}_{\eta^{\epsilon},n}(s,\cdot))\|_{\rho}+\|v^{\epsilon,\psi^{\epsilon}}_{\eta^{\epsilon},n}(s,\cdot))-v^{\epsilon,\psi^{\epsilon}}_{\eta^{\epsilon}}(s,\cdot))\|_{\rho})\geq\frac{C}{2}\right)\\
&\leq\lim_{C\rightarrow\infty}\sup_{\epsilon\in[0,1]}\mathbb{P}\left(K\sup_{s\in[0,T]}\|v^{\epsilon,\psi^{\epsilon}}_{\eta^{\epsilon},n}(s,\cdot))\|_{\rho}\geq\frac{C}{4}\right)\\
&\leq\lim_{C\rightarrow\infty}\sup_{\epsilon\in[0,1]}\mathbb{P}\left(\|v^{\epsilon,\psi^{\epsilon}}_{\eta^{\epsilon},n}(s,\cdot))-v^{\epsilon,\psi^{\epsilon}}_{\eta^{\epsilon}}(s,\cdot))\|_{\rho})\geq\frac{C}{4}\right)=0.
\end{aligned}
\end{equation}
Hence, $J_{3}^{\epsilon}$ is tight. Similar to $J^{\epsilon}_{3}$, By assumption $\mathbf{(H4)}$, Minkowski's inequality and Young's inequality with exponent $\frac{\rho}{\rho-1}$ and $\frac{\rho}{2}$, we have
\begin{equation}
\begin{aligned}
\|J^{\epsilon}_{2}\|_{\rho}&\leq C\int_{0}^{t}\|\partial_{y}G_{t-s}(\cdot)\|_{\frac{\rho}{\rho-1}}\|g(s,\cdot,v^{\epsilon,\psi^{\epsilon}}_{\eta^{\epsilon}}(s,\cdot))\|_{\frac{\rho}{2}}ds\\
&\leq CT^{\frac{\rho-1}{2\rho}}\sup_{s\in[0,T]}\|g(s,\cdot,v^{\epsilon,\psi^{\epsilon}}_{\eta^{\epsilon}}(s,\cdot))\|_{\frac{\rho}{2}}\\
&\leq CT^{\frac{\rho-1}{2\rho}}(K+K\sup_{t\in[0,T]}\|v^{\epsilon,\psi^{\epsilon}}_{\eta^{\epsilon}}(s,\cdot)\|_{\rho}+K\sup_{t\in[0,T]}\|v^{\epsilon,\psi^{\epsilon}}_{\eta^{\epsilon}}(s,\cdot)\|_{\rho}^{2}).
\end{aligned}
\end{equation}
Let $\zeta^{\epsilon}=K+K\sup_{t\in[0,T]}\|v^{\epsilon,\psi^{\epsilon}}_{\eta^{\epsilon}}(s,\cdot)\|_{\rho}+K\sup_{t\in[0,T]}\|v^{\epsilon,\psi^{\epsilon}}_{\eta^{\epsilon}}(s,\cdot)\|_{\rho}^{2}$. For $J^{\epsilon}_{3}$, we  obtain
\begin{equation}
\begin{aligned}
\lim_{C\rightarrow\infty}\sup_{\epsilon\in[0,1]}\mathbb{P}(\zeta^{\epsilon})=\lim_{C\rightarrow\infty}\sup_{\epsilon\in[0,1]}\mathbb{P}\left(K+K\sup_{t\in[0,T]}\|v^{\epsilon,\psi^{\epsilon}}_{\eta^{\epsilon}}(s,\cdot)\|_{\rho}+K\sup_{t\in[0,T]}\|v^{\epsilon,\psi^{\epsilon}}_{\eta^{\epsilon}}(s,\cdot)\|_{\rho}^{2}\geq C\right)=0.
\end{aligned}
\end{equation}
Therefore, $J^{\epsilon}_{2}$ is also tight. For the tightness of the $J_{4}^{\epsilon}$, in view of the Cauchy-Schwartz inequality, Minkowski's inequality and Young's inequality, we have
\begin{equation}
\begin{aligned}
\|J_{4}^{\epsilon}\|_{\rho}&\leq \left(\int_{0}^{t}\int_{0}^{1}\left(\int_{0}^{y}\psi^\epsilon(s,y^{\prime})dy^\prime\right)^{2} dyds\right)^{\frac{1}{2}}\left\|\left(\int_{0}^{t}\int_{0}^{1}G^{2}_{t-s}(\cdot,y)\sigma^{2}(s,y,v^{\epsilon,\psi^{\epsilon}}_{\eta^{\epsilon}}(s,y))dyds\right)^{\frac{1}{2}}\right\|_{\rho}\\
&\leq C \left(\int_{0}^{t}\int_{0}^{1}|\psi^\epsilon(y,s)|^{2}dyds\right)^{\frac{1}{2}}\left(\int_{0}^{t}\left\|G^{2}_{t-s}(\cdot)\right\|_{1}\left\|\sigma(s,\cdot,v^{\epsilon,\psi^{\epsilon}}_{\eta^{\epsilon}})\right\|^{2}_{\rho}ds\right)^{\frac{1}{2}}\\
&\leq CT^{\frac{1}{2}}M^{\frac{1}{2}}(K+K\sup_{t\in[0,T]}\|v_{\eta^{\epsilon}}^{\epsilon,\psi^{\epsilon}}(t,\cdot)\|_{\rho}).
\end{aligned}
\end{equation}
Similar to the terms $J_{2}^{\epsilon}$ and $J_{3}^{\epsilon}$,  $J_{4}^{\epsilon}$ is also tight. Finally, we illustrate  the tightness of the stochastic convolution $J_{5}^{\epsilon}$, similar to Lemma \ref{tightness of stochatic term}, we can prove that $J_{5}^{\epsilon}$ is tight in $C([0,T]\times [0,1])$. Hence, it is tightness in $C([0,T];L^{\rho}([0,1]))$. Then $v^{\epsilon,\psi^{\epsilon}}_{\eta^{\epsilon}}$ is also tightness in $C([0,T];L^{\rho}([0,1]))$. According to Prokhorov's theorem, we can extract a subsequence of the controlled process $\{v^{\epsilon,\psi^{\epsilon}}_{\eta^{\epsilon}}\}$ converge to $v_{\eta}^{0,\psi}$ in distribution in $C([0,T];L^{\rho}([0,1]))$, let us write it down as $\{v^{\epsilon,\psi^{\epsilon}}_{\eta^{\epsilon}}\}$.  Hence we need to check  that $v_{\eta}^{0,\psi}$ satisfies \eqref{ske}, that is to say
\begin{equation}\nonumber
\begin{aligned}
&J_{1}^{\epsilon}\rightarrow\int_{0}^{1}G_{t}(x,y)\eta(y)dy\\
&J_{2}^{\epsilon}\rightarrow-\int_{0}^{t}\int_{0}^{1}\partial_{y}G_{t-s}(x,y)g(s,y,v^{0,\psi}_{\eta}(s,y))dyds\\
&J_{3}^{\epsilon}\rightarrow\int_{0}^{t}\int_{0}^{1}G_{t-s}(x,y)f(s,y,v^{0,\psi}_{\eta}(s,y))dyds\\
&J_{4}^{\epsilon}\rightarrow\int_{0}^{t}\int_{0}^{1}G_{t-s}(x,y)\sigma(s,y,v^{0,\psi}_{\eta}(s,y))\int_{0}^{y}\psi(s,y^\prime)dy^\prime dyds\\
&J_{5}^{\epsilon}\rightarrow 0
\end{aligned}
\end{equation}
in distribution in $C([0,T];L^{\rho}([0,1]))$. The convergence of $J^{\epsilon}_{1}$ is trivial. The Skorokhod representation theorem\cite[Theorem 3.2.2]{MR1876437} ensure that we can assume  that almost sure convergence on a common probability space. Indeed,  since $v^{\epsilon,\psi^\epsilon}_{\eta^\epsilon}$ is tight, then random elements $(v^{\epsilon,\psi^\epsilon}_{\eta^\epsilon},\frac{\partial W}{\partial x}, \psi^\epsilon)$ is also tight. Applying the Skorokhod representation theorem,  we can find a common probability space $(\Omega^{\prime},\mathcal{F}^{\prime},\mathbb{P}^{\prime})$  and  a sequences of random elements $\left(\mu_{\eta^{\epsilon},\frac{\tilde{W}^\epsilon}{\partial x}}^{\epsilon,\tilde{\psi}^{\epsilon}}, \frac{\tilde{W}^\epsilon}{\partial x}, \tilde{\psi}^{\epsilon}\right)$such that
$$\left(\mu_{\eta^{\epsilon},\frac{\tilde{W}^\epsilon}{\partial x}}^{\epsilon,\tilde{\psi}^{\epsilon}}, \frac{\tilde{W}^\epsilon}{\partial x}, \tilde{\psi}^{\epsilon}\right)=\left(v^{\epsilon,\psi^\epsilon}_{\eta^\epsilon},\frac{\partial W}{\partial x},\psi^\epsilon\right)$$
in distribution and $\mathbb{P}^\prime-$almost surely, $\left(\mu_{\eta^{\epsilon},\frac{\tilde{W}^\epsilon}{\partial x}}^{\epsilon,\tilde{\psi}^{\epsilon}}, \frac{\tilde{W}^\epsilon}{\partial x}, \tilde{\psi}^{\epsilon}\right)$ converges to a random element $\left(\mu^{0,\tilde{\psi}}_{\eta},\frac{\partial \tilde{W}}{\partial x},\tilde{\psi}\right)$ in the space
$$C([0,T];L^\rho([0,1]))\times C([0,T]\times[0,1];\mathbb{R})\times \mathcal{S}^{N}.$$
 In addition, $\mu_{\eta^{\epsilon},\frac{\tilde{W}^\epsilon}{\partial x}}^{\epsilon,\tilde{\psi}^{\epsilon}}$  satisfies the following equations
\begin{equation}\label{controlled equation1}
\begin{aligned}
\mu_{\eta^{\epsilon},\frac{\tilde{W}^\epsilon}{\partial x}}^{\epsilon,\tilde{\psi}^{\epsilon}}(t,x)=&\int_{0}^{1}G_{t}(x,y)\eta^{\epsilon}(y)dy-\int_{0}^{t}\int_{0}^{1}\partial_{y}G_{t-s}(x,y)g(s,y,\mu_{\eta^{\epsilon},\frac{\tilde{W}^\epsilon}{\partial x}}^{\epsilon,\tilde{\psi}^{\epsilon}}(s,y))dyds\\
&+\int_{0}^{t}\int_{0}^{1}G_{t-s}(x,y)f(s,y,\mu_{\eta^{\epsilon},\frac{\tilde{W}^\epsilon}{\partial x}}^{\epsilon,\tilde{\psi}^{\epsilon}}(s,y))dyds\\
&+\int_{0}^{t}\int_{0}^{1}G_{t-s}(x,y)\sigma(s,y,\mu_{\eta^{\epsilon},\frac{\tilde{W}^\epsilon}{\partial x}}^{\epsilon,\tilde{\psi}^{\epsilon}}(s,y))\int_{0}^{y}\tilde{\psi}^\epsilon(s,y^\prime)dy^\prime dyds\\
&+\sqrt{\epsilon}\int_{0}^{t}\int_{0}^{1}G_{t-s}(x,y)\sigma(s,y,\mu_{\eta^{\epsilon},\frac{\tilde{W}^\epsilon}{\partial x}}^{\epsilon,\tilde{\psi}^{\epsilon}}(s,y))\tilde{W}^\epsilon(dy,ds),
\end{aligned}
\end{equation}
and we can apply  $\mu_{\eta^{\epsilon},\frac{\tilde{W}^\epsilon}{\partial x}}^{\epsilon,\tilde{\psi}^{\epsilon}} \rightarrow \mu^{0,\tilde{\psi}}_{\eta}$ and $\tilde{\psi}^\epsilon\rightarrow\tilde{\psi}-$a.s.  to prove  that $ \mu^{0,\tilde{\psi}}_{\eta}$ satisfies the following equation
\begin{equation}\label{skeleton equation1}
\begin{aligned}
\mu^{0,\tilde{\psi}}_{\eta}(t,x)=&\int_{0}^{1}G_{t}(x,y)\eta(y)dy-\int_{0}^{t}\int_{0}^{1}\partial_{y}G_{t-s}(x,y)g(s,y,\mu^{0,\tilde{\psi}}_{\eta}(s,y))dyds\\
&+\int_{0}^{t}\int_{0}^{1}G_{t-s}(x,y)f(s,y,\mu^{0,\tilde{\psi}}_{\eta}(s,y))dyds\\
&+\int_{0}^{t}\int_{0}^{1}G_{t-s}(x,y)\sigma(s,y,\mu^{0,\tilde{\psi}}_{\eta}(s,y))\int_{0}^{y}\tilde{\psi}(s,y^\prime)dy^\prime dyds.
\end{aligned}
\end{equation}
Due to $\tilde{\psi}^\epsilon\rightarrow \tilde{\psi}$ and $\tilde{\psi}^\epsilon$ the same distribution as $\psi^\epsilon$, then $\tilde{\psi}$ has the same distribution as $\psi$. It follows from  the uniqueness of the solution of equation \eqref{skeleton equation1}  that  $\mu^{0,\tilde{\psi}}_{\eta}$  has the same distribution as $v^{0,\psi}_{\eta}=\mathcal{G}^0\left(\eta,\int_{0}^{t}\int_{0}^{x}\psi(s,x)dxds\right)$, then $v_{\eta^{\epsilon}}^{\epsilon,\psi^{\epsilon}}$ converge to  $v^{0,\psi}_{\eta}=\mathcal{G}^0\left(\eta,\int_{0}^{t}\int_0^x\psi(s,y)dyds\right)$ in distribution. For simplicity,  we further assume that $v_{\eta^{\epsilon}}^{\epsilon,\psi^{\epsilon}}\rightarrow v^{0,\psi}_{\eta}$ and $\psi^\epsilon\rightarrow\psi, \mathbb{P}^{\prime}-a.s.$, we need to check $v^{0,\psi}_{\eta}=\mathcal{G}^0\left(\eta,\int_{0}^{t}\int_0^x\psi(s,y)dyds\right)$ as $\mu^{0,\tilde{\psi}}_{\eta}=\mathcal{G}^0\left(\eta,\int_{0}^{t}\int_0^x\tilde{\psi}(s,y)dyds\right)$.  By Young's inequality with exponent $\frac{\rho}{2}$ and $\frac{\rho}{\rho-1}$, Cauchy-Schwartz's inequality and assumption $\textbf{(H3)}$, we have
\begin{equation}
\begin{aligned}
&\left\|J_{2}^{\epsilon}+\int_{0}^{t}\int_{0}^{1}\partial_{y}G_{t-s}(x,y)g(s,y,v^{0,\psi}_{\eta}(s,y))dyds\right\|_{\rho}\nonumber\\
&~~=\left\|\int_{0}^{t}\int_{0}^{1}\partial_{y}G_{t-s}(\cdot,y)\left(g(s,y,v_{\eta^{\epsilon}}^{\epsilon,\psi^{\epsilon}})-g(s,\cdot,v^{0,\psi}_{\eta})\right)dyds\right\|_{\rho}
\\&~~\leq C \int_{0}^{t}(t-s)^{-1+\frac{\rho-1}{2\rho}}\left\|(1+|v_{\eta^{\epsilon}}^{\epsilon,\psi^{\epsilon}}(s,\cdot)|+|v^{0,\psi}_{\eta}(s,\cdot)|)(|v_{\eta^{\epsilon}}^{\epsilon,\psi^{\epsilon}}(s,\cdot)-v^{0,\psi}_{\eta}(s,\cdot)|)\right\|_{\frac{\rho}{2}}ds\\
&~~\leq C\int_{0}^{t}(t-s)^{-1+\frac{\rho-1}{2\rho}} (1+\|v_{\eta^{\epsilon}}^{\epsilon,\psi^{\epsilon}}(s,\cdot)\|_{\rho}+\|v^{0,\psi}_{\eta}(s,\cdot)\|_{\rho})\|v_{\eta^{\epsilon}}^{\epsilon,\psi^{\epsilon}}(s,\cdot)-v^{0,\psi}_{\eta}(s,\cdot)\|ds\\
&~~\leq C\int_{0}^{t}(t-s)^{-1+\frac{\rho-1}{2\rho}}ds(1+\sup_{t\in[0,T]}\|v_{\eta^{\epsilon}}^{\epsilon,\psi^{\epsilon}}(t,\cdot)\|_{\rho}+\sup_{t\in[0,T]}\|v^{0,\psi}_{\eta}(t,\cdot)\|_{\rho}) \sup_{t\in[0,T]} \|v_{\eta^{\epsilon}}^{\epsilon,\psi^{\epsilon}}(t,\cdot)-v^{0,\psi}_{\eta}(t,\cdot)\|_{\rho}.
\end{aligned}
\end{equation}
Since $v_{\eta^{\epsilon}}^{\epsilon,\psi^{\epsilon}}\rightarrow v^{0,\psi}_{\eta}$ almost surely as $\epsilon\rightarrow 0$, then the above inequality converge to zero almost surely. Hence, we have
\begin{equation}
J_{2}^{\epsilon}\rightarrow-\int_{0}^{t}\int_{0}^{1}\partial_{y}G_{t-s}(x,y)g(s,y,v^{0,\psi}_{\eta}(s,y))dyds.
\end{equation}
For $J^{\epsilon}_{3}$, we can obtain the convergence of $J^{\epsilon}_{3}$ in similar way. By the assumption $\mathbf{(H3)}$, Young's inequality and Cauchy-Schwartz's inequality
\begin{equation}
\begin{aligned}
&\left\|J_{3}^{\epsilon}-\int_{0}^{t}\int_{0}^{1}G_{t-s}(x,y)f(s,y,v^{0,\psi}_{s,y})dyds\right\|_{\rho} =\left\|\int_{0}^{t}\int_0^{1}G_{t-s}(\cdot,y)(f(s,y,v_{\eta^{\epsilon}}^{\epsilon,\psi^{\epsilon}}-f(s,y,v^{0,\psi}_{\eta}))dyds\right\|\\
&\leq C\int_{0}^{t}(t-s)^{-\frac{1}{2}+\frac{\rho-1}{2\rho}}ds(1+\sup_{t\in[0,T]}\|v_{\eta^{\epsilon}}^{\epsilon,\psi^{\epsilon}}(t,\cdot)\|_{\rho}+\sup_{t\in[0,T]}\|v^{0,\psi}_{\eta}(t,\cdot)\|_{\rho}) \sup_{t\in[0,T]} \|v_{\eta^{\epsilon}}^{\epsilon,\psi^{\epsilon}}(t,\cdot)-v^{0,\psi}_{\eta}(t,\cdot)\|_{\rho}.
\end{aligned}
\end{equation}
Hence, we have
\begin{equation}
J_{3}^{\epsilon}\rightarrow\int_{0}^{t}\int_{0}^{1}G_{t-s}(x,y)f(s,y,v^{0,\psi}_{\eta}(s,y))dyds.
\end{equation}
As for  the convergence of $J^{\epsilon}_{4}$, by triangle inequality, we obtain
\begin{equation}
\begin{aligned}
&\left\|J^{\epsilon}_{4}-\int_{0}^{t}\int_{0}^{1}G_{t-s}(x,y)\sigma(s,y,v^{0,\psi}_{\eta}(s,y))\int_{0}^{y}\psi(s,y^\prime)dy^\prime dyds\right\|\\
&\leq \left\|\int_{0}^{t}\int_{0}^{1}|G_{t-s}(\cdot,y)||\sigma(s,y,v_{\eta^{\epsilon}}^{\epsilon,\psi^{\epsilon}}(s,y))-\sigma(s,y,v^{0,\psi}_{\eta})(s,y)|\left|\int_{0}^{y}\psi^\epsilon(s,y^\prime)dy^\prime\right|dyds\right\|_{\rho}\\
&+\left\|\int_{0}^{t}\int_{0}^{1}|G_{t-s}(\cdot,y)||\sigma(s,y,v^{0,\psi}_{\eta})|\left|\int_{0}^{y}\psi^\epsilon(s,y^\prime)-\psi(s,y^{\prime})dy^\prime\right|dyds\right\|_{\rho}\\
&:=J^{\epsilon,1}_{4}+J^{\epsilon,2}_{4}.
\end{aligned}
\end{equation}
For $J^{\epsilon,1}_{4}$, in view of Cauchy-Schwartz's inequality,  Minkowski's inequality, Young's inequality and $\mathbf{(H2)}$, we have
\begin{equation}
\begin{aligned}
\|J_{4}^{\epsilon,1}\|_{\rho} &\leq CT^{\frac{1}{2}}M^{\frac{1}{2}}\left\|\left(\int_{0}^{t}\int_{0}^{1}G^{2}_{t-s}(x,y)\left(\sigma(s,y,v_{\eta^{\epsilon}}^{\epsilon,\psi^{\epsilon}}(s,y))-\sigma(s,y,v^{0,\psi}_{\eta}(s,y))\right)^{2}dyds\right)^{\frac{1}{2}}\right\|_{\rho}\\
&\leq CT^{\frac{1}{2}} M^{\frac{1}{2}}\left(\int_{0}^{t}\|G^{2}_{t-s}(\cdot)\|_{1}\|(\sigma(s,\cdot,v_{\eta^{\epsilon}}^{\epsilon,\psi^{\epsilon}}(s,\cdot))-\sigma(s,\cdot,v_{\eta}^{0,\psi}(s,\cdot)))^{2}\|ds\right)^{\frac{1}{2}}\\
&\leq C T^{\frac{1}{2}}M^{\frac{1}{2}}\left(\int_{0}^{t}(t-s)^{-\frac{1}{2}}ds\right)^{\frac{1}{2}}\sup_{t\in [0,T]}\|v_{\eta^{\epsilon}}^{\epsilon,\psi^{\epsilon}}(s,\cdot)-v_{\eta}^{0,\psi}(s,\cdot)\|_{\rho}.
\end{aligned}
\end{equation}
Thus, $J^{\epsilon,1}_{4}\rightarrow 0$ almost surely. For  $J^{\epsilon,2}_{4}$, by $\psi^{\epsilon}\rightarrow \psi$ weakly in $\mathcal{S}^{N}$, H\"{o}lder's inequality,  Minkowski's inequality, Young's inequality and $\mathbf{(H2)}$, we obtain
\begin{equation}
\begin{aligned}
\|J^{\epsilon,2}_{4}\|_{\rho}&\leq \left(\int_{0}^{t}\int_{0}^{1}(\psi^{\epsilon}(s,y)-\psi(s,y))^{2}dyds\right)^{\frac{1}{2}}\left\|\left(\int_{0}^{t}\int_{0}^{1}G^{2}_{t-s}(x,y)\sigma^{2}(s,y,v^{0,\psi}_{\eta}(s,y))dyds\right)^{\frac{1}{2}}\right\|_{\rho}\\
&\leq\left(\int_{0}^{t}\int_0^1(\psi^\epsilon(s,y)-\psi(s,y))^2dyds\right)^{\frac{1}{2}}\left(\int_{0}^{t}(t-s)^{-\frac{1}{2}}ds\right)^{\frac{1}{2}}\sup_{t\in[0,T]}(1+\|v_{\eta}^{0,\psi}(t,\cdot)\|_{\rho}).
\end{aligned}
\end{equation}
Thus we get $J_{4}^{\epsilon,2}\rightarrow 0$ almost surely. Then we have
\begin{equation}
J_{4}^{\epsilon}\rightarrow\int_{0}^{t}\int_{0}^{1}G_{t-s}(x,y)\sigma(s,y,v^{0,\psi}_{\eta}(s,y))\int_{0}^{y}\psi(s,y^\prime)dy^\prime dyds.
\end{equation}
Finally, $J^{\epsilon}_{5}$ has a version $C([0,T]\times[0,1];\mathbb{R})$ as Lemma \ref{tightness of stochatic term}, then $J^{\epsilon}_{5}$ converges to zero almost surely. We can obtain that the controlled process
converge to the skeleton equation for  a given subsequence. Note that for any sequence $\epsilon_{n}$, we have the same result. So according to $\epsilon_n$ are arbitrary , then $v^{\epsilon,\psi^\epsilon}_{\eta^\epsilon}\rightarrow v^{0,\psi}_{\eta}=\mathcal{G}^{0}\left(\eta,\int_{0}^{t}\int_{0}^{x}\psi(s,y)dyds\right)$. \\

Finally, we need to check the condition $(a)$, it suffices to combine with the well-posedness of the skeleton equation and the result of convergence of $v^{0,\psi^{\epsilon}}_{\eta^{\epsilon}}$ to check it. So, the proof of Theorem 6.1 is completed.
\end{proof}
\textbf{Conflict of interest:} The authors declare that they have no conflict of interest.

\bibliographystyle{abbrv}

\end{document}